\newtheorem{theorem}{Theorem}[section]
\newcommand{\adfmod}[1]{~(\mathrm{mod}~#1)}    
\newcommand{\adfPENT}{\mathop{\mathrm{PENT}} } 
\newcommand{\ADFvfyParStart}[1]{}
\newcommand{\adfsplit}{\par}                   
\newcommand{\adfhide}[1]{}                     
\begin{document}
\title{Existence results for pentagonal geometries}
\author{A. D. Forbes}
\address{School of Mathematics and Statistics\\
        The Open University\\
        Walton Hall, Milton Keynes MK7 6AA, UK}
\email{anthony.d.forbes@gmail.com}
\author{T. S. Griggs}
\address{School of Mathematics and Statistics\\
        The Open University\\
        Walton Hall, Milton Keynes MK7 6AA, UK}
\email{terry.griggs@open.ac.uk}
\author{K. Stokes}
\address{Department of Mathematics and Mathematical Statistics\\
        Ume\aa University\\
        901 87 Ume\aa, SWEDEN}
\email{klara.stokes@umu.se}

\subjclass[2010]{05B25, 51E12}
\keywords{pentagonal geometry, group divisible design}

\maketitle
\begin{abstract}
%
New results on pentagonal geometries PENT($k,r$) with block sizes $k = 3$ or $k = 4$ are given. In particular we completely determine the existence spectra for PENT($3,r$) systems with the maximum number of opposite line pairs as well as those without any opposite line pairs. A wide-ranging result about PENT($3,r$) with any number of opposite line pairs is proved. We also determine the existence spectrum of PENT($4,r$) systems with eleven possible exceptions.
\end{abstract}

\section{Introduction}\label{sec:Introduction}
Generalized polygons were introduced by Tits \cite{T} over sixty years ago and can be described as follows. A \textit{partial linear space} is an ordered pair $(V,\mathcal{L})$ where $V$ is a set of elements, usually called \textit{points}, of cardinality $v$ and $\mathcal{L}$ is a family of subsets of $V$, usually called \textit{lines} or \textit{blocks}, such that every pair of distinct points is contained in at most one line. The number of lines is denoted by $b$. If every line has the same cardinality $k \geq 2$, the space is said to be \textit{uniform} and if every point is incident with the same number $r \geq 1$ of lines it is said to be \textit{regular}.

Denote such a uniform regular partial linear space by PLS($k,r$). A \textit{generalized polygon} is a PLS($k,r$) for which the girth of the \textit{point-line incidence graph} or \textit{Levi graph} is twice the diameter, $n$. The case where $k=r=2$ are ordinary polygons. In \cite{FH}, Feit \& Higman proved that the only finite examples are \textit{thin} (two points on each line or two lines through each point) or $n \in \{2,3,4,6,8\}$. So there are no (thick and finite) generalized pentagons.

This result motivated the authors of \cite{BBDS} to introduce an alternative way to generalize the pentagon. A \textit{pentagonal geometry} PENT($k,r$) is a partial linear space PLS($k,r$) in which for all points $x \in V$, the points not collinear with $x$ are themselves collinear. We call this line the \textit{opposite line} to $x$ and denote it by $x^{\mathrm{opp}}$. If two points $x$ and $y$ have the same opposite line $x^{\mathrm{opp}}=y^{\mathrm{opp}}=l$, then $z^{\mathrm{opp}}=m$ for all points $z \in l$ where $m$ is the line joining $x$ and $y$. Similarly $w^{\mathrm{opp}}=l$ for all $w \in m$. Such a pair of lines $(l,m)$ is called an \textit{opposite line pair}. The pentagon is the geometry PENT($2,2$) and the Desargues configuration is PENT($3,3$). When $r=1$, PENT($k,1$) consists of two  disjoint lines, each of cardinality $k$. This is a \textit{degenerate} pentagonal geometry.

The basic theory of pentagonal geometries was also developed in \cite{BBDS}. The following results are of fundamental importance.
\begin{theorem}\label{count}
A pentagonal geometry $\adfPENT(k,r)$ has $rk-r+k+1$ points and $(rk-r+k+1)r/k$ lines. Thus a necessary condition for existence is that $k$ divides $r(r-1)$.
\end{theorem}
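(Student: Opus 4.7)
The strategy is straightforward double counting, starting from a fixed point $x$ and partitioning $V \setminus \{x\}$ into collinear and non-collinear parts.

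First I would fix a point $x \in V$ and count the points collinear with $x$. Since the geometry is regular, $x$ lies on exactly $r$ lines, and each such line contains $k-1$ points other than $x$. Because $(V,\mathcal{L})$ is a partial linear space, any two distinct lines through $x$ meet only at $x$, so these $r$ sets of $k-1$ points are pairwise disjoint. Hence the number of points collinear with (and distinct from) $x$ is exactly $r(k-1)$.

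Next, I would invoke the defining pentagonal property: the points not collinear with $x$ all lie on the single line $x^{\mathrm{opp}}$, and since every line has cardinality $k$, this accounts for exactly $k$ further points. Adding $x$ itself gives
\[
v \;=\; 1 + r(k-1) + k \;=\; rk - r + k + 1.
\]
The line count then follows from counting incident point-line pairs in two ways: summing over points gives $vr$, summing over lines gives $bk$, so $b = vr/k = (rk-r+k+1)r/k$.

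Finally, since $b$ must be an integer, $k$ divides $(rk-r+k+1)r$. Writing $(rk-r+k+1)r = k(r^2 + r) - r(r-1)$, the first term is already a multiple of $k$, so $k \mid r(r-1)$.

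There is really no obstacle here; the only point that needs a little care is justifying that the opposite line contributes exactly $k$ points to the count (not fewer), but this is immediate from uniformity of the line size together with the requirement that $x^{\mathrm{opp}}$ is a line of the geometry.
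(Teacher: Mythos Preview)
Your argument is correct and is the standard double-counting proof. Note, however, that the paper does not actually supply its own proof of this theorem: it is quoted from \cite{BBDS} as background material, so there is nothing to compare against beyond observing that your derivation is exactly the intended one.
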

\begin{theorem}\label{ineq}
If there exists a pentagonal geometry $\adfPENT(k,r)$ with $r > 1$, then $r \geq k$.
\end{theorem}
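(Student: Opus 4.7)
The plan is to fix a point $x$, analyze its opposite line $\ell = x^{\mathrm{opp}}$, and split the argument according to whether the assignment $y \mapsto y^{\mathrm{opp}}$ is injective on $\ell$. First I would use Theorem \ref{count} to count: since every point is on $r$ lines of size $k$, exactly $r(k-1)$ points are collinear with $x$ (besides $x$ itself), leaving $v - 1 - r(k-1) = k$ points non-collinear with $x$, so $|\ell| = k$. The key symmetric observation is that for any $y \in \ell$, the point $x$ is not collinear with $y$, hence $x \in y^{\mathrm{opp}}$, which forces $y^{\mathrm{opp}}$ to be one of the $r$ lines through $x$.

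Now define $\phi : \ell \to \{\text{lines through } x\}$ by $\phi(y) = y^{\mathrm{opp}}$. If $\phi$ is injective, then immediately $k = |\ell| \leq r$ and we are done. If $\phi$ is not injective, there exist distinct $y_1, y_2 \in \ell$ with $y_1^{\mathrm{opp}} = y_2^{\mathrm{opp}} = m$. The structural fact recalled in the excerpt — that coincidence of opposite lines propagates — says that $z^{\mathrm{opp}}$ equals the line joining $y_1$ and $y_2$ (namely $\ell$) for every $z \in m$; applying the same fact once more shows $\phi$ is in fact the constant map to $m$, so $(\ell, m)$ is an opposite line pair.

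To handle this remaining case, I would pick an auxiliary point $z$ outside $\ell \cup m$. Such a $z$ exists because $v - 2k = (r-1)(k-1) \geq 1$ under the hypotheses $r > 1$ and $k \geq 2$. Since $y^{\mathrm{opp}} = m$ for every $y \in \ell$ and $z \notin m$, the point $z$ is collinear with every $y \in \ell$. For each $y \in \ell$, let $L_y$ be the unique line through $z$ and $y$; if $L_{y_1} = L_{y_2}$ for distinct $y_1, y_2$, then this line contains two points of $\ell$ and must equal $\ell$ (by the partial linear space axiom), contradicting $z \notin \ell$. Hence the $k$ lines $L_y$ are distinct lines through $z$, and so $r \geq k$.

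The main obstacle is that the clean injectivity argument does not by itself cover the opposite-line-pair case, in which $\phi$ collapses completely. The decisive idea there is to work with a point $z$ lying outside both members of the pair; its mere existence (guaranteed precisely by $r > 1$ together with $k \geq 2$) forces $r$ to accommodate $k$ distinct lines to the $k$ points of $\ell$. Identifying this dichotomy cleanly and verifying that the non-injective case really does produce an opposite line pair (via the iterated application of the excerpt's structural statement) is the only delicate part; everything else is a short counting argument.
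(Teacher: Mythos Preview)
Your argument is correct. One small remark: the paper does not actually prove Theorem~\ref{ineq}; it is quoted from \cite{BBDS} as background, so there is no ``paper's own proof'' to compare against. Your proof stands on its own and is complete.

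A couple of minor points worth tightening. In the opposite-line-pair case you do not really need the full propagation (that $\phi$ is constant on all of $\ell$); once you know $y_1^{\mathrm{opp}} = y_2^{\mathrm{opp}} = m$, the structural fact from the introduction immediately gives $w^{\mathrm{opp}} = m$ for every $w \in \ell$, which is exactly what you use in step~8. Also, your existence check for the auxiliary point $z$ implicitly uses $k \ge 2$; that is fine since the paper's definition of a uniform partial linear space already assumes $k \ge 2$, but it is worth saying explicitly. With those cosmetic adjustments the proof is clean: the dichotomy ``$\phi$ injective versus $(\ell,m)$ an opposite line pair'' is exactly the right organizing idea, and the second branch is handled by the elementary observation that an outside point must be joined to the $k$ points of $\ell$ by $k$ distinct lines.
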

\begin{theorem}\label{opplinepair}
A pentagonal geometry $\adfPENT(k,r)$ with $1<r<3k$ has either\\
(i) no opposite line pair, or\\
(ii) $r=2k+1$ and the points are partitioned into opposite line pairs.
\end{theorem}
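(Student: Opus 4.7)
The plan is to assume an opposite line pair $(l,m)$ exists, set $V_1 = l \cup m$ (of size $2k$), and derive both the constraint $r = 2k+1$ and the partition property. Any $y \notin V_1$ is collinear with every point of $V_1$: for $x \in l$, the $k$ points not collinear with $x$ form the line $x^{\mathrm{opp}} = m$, so $y \notin m$ is collinear with $x$. Moreover any single line through $y$ meets $V_1$ in at most one point (two points of $l$ or two of $m$ would force that line to be $l$ or $m$ respectively, but $y$ is in neither; one point from each of $l$ and $m$ on a common line would contradict the opposite pair condition). Hence $2k$ distinct lines through $y$ are needed to cover $V_1$, giving $r \geq 2k$. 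Let $N_y$ be the set of lines through $y$ missing $V_1$, so $|N_y| = r - 2k$. For each $y' \in y^{\mathrm{opp}}$ the line $(y')^{\mathrm{opp}}$ passes through $y$ (by symmetry of non-collinearity) and misses $V_1$ (by the preceding step applied to $y'$), so it lies in $N_y$. This produces a map $\phi_y : y^{\mathrm{opp}} \to N_y$, $y' \mapsto (y')^{\mathrm{opp}}$. In particular $r = 2k$ is already impossible, since $N_y$ would be empty.

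The main step uses the principle recalled in the introduction: if two distinct points share an opposite line $n$, then together with the line joining them, $n$ forms an opposite line pair. Applied to $\phi_y$: either $\phi_y$ is not injective---distinct $y', y'' \in y^{\mathrm{opp}}$ have $(y')^{\mathrm{opp}} = (y'')^{\mathrm{opp}} = n$---in which case $(n, y^{\mathrm{opp}})$ is an opposite line pair containing $y$ (as $y \in n$); or $\phi_y$ is injective, forcing $k = |y^{\mathrm{opp}}| \leq |N_y| = r - 2k$, i.e., $r \geq 3k$, which contradicts the hypothesis. Hence every $y \in V \setminus V_1$ lies in an opposite line pair, and therefore every point of $V$ does.

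Distinct opposite line pairs are easily seen to be disjoint (a common point would force the partner lines, and then the primary lines, of the two pairs to coincide), so $V$ partitions into blocks of size $2k$, giving $2k \mid v$. From $v - 2k = (r-1)(k-1)$ this reduces to $2k \mid (r-1)(k-1)$, and a short case analysis (noting $\gcd(2k,k-1)$ is $1$ or $2$ according to the parity of $k$) shows that the unique $r$ in the range $2k+1 \leq r < 3k$ satisfying this divisibility is $r = 2k+1$. The main obstacle is the injectivity--pigeonhole dichotomy of the middle paragraph, where the hypothesis $r < 3k$ enters precisely to exclude the injective alternative; everything else is either a direct collinearity argument or routine divisibility.
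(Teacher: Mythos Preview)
The paper does not give a proof of this theorem; it is quoted without proof from \cite{BBDS} as part of the background material, so there is nothing in the present paper to compare against. Your argument is correct. Two small points you leave implicit are worth making explicit: first, the whole argument requires the existence of some $y \notin V_1$, which follows from $|V| - 2k = (r-1)(k-1) > 0$ since $r > 1$ and $k \ge 2$; second, to apply your collinearity step to $y' \in y^{\mathrm{opp}}$ you need $y' \notin V_1$, which holds because every point of $V_1$ has already been shown collinear with $y$ and hence cannot lie on $y^{\mathrm{opp}}$. With these remarks, the pigeonhole dichotomy on the map $\phi_y$ and the concluding divisibility analysis go through exactly as you describe.
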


An important concept in the theory of partial linear spaces is that of the \textit{leave} or \textit{deficiency graph}. This is the graph $G$ whose vertex set is $V$ with two points $x$ and $y$ being adjacent if and only if they are not collinear. The following result is also proved in \cite{BBDS}.
\begin{theorem}\label{leave}
The deficiency graph $G$ of a pentagonal geometry $\adfPENT(k,r)$ is the disjoint union of complete bipartite graphs $K_{k,k}$ (one for each opposite line pair) and $G'$ where $G'$ is a $k$-regular graph of girth at least $5$, not necessarily connected.
\end{theorem}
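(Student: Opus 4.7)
The plan is to read off everything from the defining axiom: the opposite line $x^{\mathrm{opp}}$ is, by definition, a line of size $k$ whose points are exactly those not collinear with $x$. Since adjacency in $G$ is non-collinearity, the neighbourhood of $x$ in $G$ equals $x^{\mathrm{opp}}$, so $G$ is $k$-regular. For triangle-freeness, if $x,y,z$ were pairwise adjacent then $y,z \in x^{\mathrm{opp}}$, whence $y$ and $z$ would be collinear via the line $x^{\mathrm{opp}}$ and therefore non-adjacent in $G$; contradiction.

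The key observation is what a $4$-cycle forces. I would analyse a $4$-cycle $x,y,z,w$ in $G$: then $y,w \in x^{\mathrm{opp}}$ and also $y,w \in z^{\mathrm{opp}}$, so the two distinct points $y,w$ lie on both lines $x^{\mathrm{opp}}$ and $z^{\mathrm{opp}}$. As the ambient space is partial linear, this forces $x^{\mathrm{opp}}=z^{\mathrm{opp}}$, which is precisely the condition for an opposite line pair. Writing $l=x^{\mathrm{opp}}=z^{\mathrm{opp}}$ and $m$ for the line through $x$ and $z$, the remarks after the definition of $\adfPENT(k,r)$ give $z^{\mathrm{opp}}=m$ for every $z\in l$ and $w^{\mathrm{opp}}=l$ for every $w\in m$.

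Next I would peel off the contribution of each opposite line pair $(l,m)$ as a connected component. The $2k$ points of $l\cup m$ all have their $G$-neighbourhoods inside $l\cup m$; points of $l$ are pairwise collinear via $l$ (so non-adjacent), similarly for $m$, and each point of $l$ is adjacent in $G$ to every point of $m$. Hence the induced subgraph on $l\cup m$ is exactly $K_{k,k}$ and it is closed under neighbourhoods. Let $G'$ be the subgraph induced by the points lying on no opposite line. A short check shows $G'$ is also closed under neighbourhoods: if $x$ lies on no opposite line and $y\in x^{\mathrm{opp}}$ lay on some opposite line $l'$ with partner $m'$, then $y^{\mathrm{opp}}=m'$ would contain $x$, contradicting the assumption on $x$. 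Therefore $G'$ is $k$-regular. It inherits triangle-freeness from $G$, and any $4$-cycle in $G'$ would, by the previous paragraph, generate an opposite line pair whose vertices lie inside $G'$, contradicting the definition of $G'$; hence the girth of $G'$ is at least $5$.

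The main obstacle I anticipate is making the $4$-cycle analysis tight, specifically recording that $y\ne w$ and $x\ne z$ so that the conclusion $x^{\mathrm{opp}}=z^{\mathrm{opp}}$ genuinely produces a pair of distinct lines $l,m$ rather than a degenerate coincidence, and then noting that the argument covers \emph{every} $4$-cycle, so no short cycles can survive outside the $K_{k,k}$ components. Everything else is routine once that step is clean.
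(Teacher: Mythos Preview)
The paper does not actually contain a proof of Theorem~\ref{leave}; it merely quotes the result from \cite{BBDS} (``The following result is also proved in \cite{BBDS}''), so there is no in-paper argument to compare against. Your argument is correct and is essentially the natural one: $k$-regularity from $|x^{\mathrm{opp}}|=k$, triangle-freeness because two neighbours of $x$ lie on the line $x^{\mathrm{opp}}$, and the key step that any $4$-cycle forces $x^{\mathrm{opp}}=z^{\mathrm{opp}}$ via partial linearity, hence an opposite line pair and a $K_{k,k}$ component. Your closure argument for $G'$ is also fine. The only cosmetic point worth tightening is to state explicitly that in the $4$-cycle $x,y,z,w$ the vertices $x$ and $z$ are non-adjacent (hence collinear, so the line $m$ through them exists and is distinct from $l$); you allude to this in your final paragraph but it deserves a one-line justification in the body of the proof rather than being flagged as a potential obstacle.
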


Turning now to existence results, the authors of \cite{BBDS} used the previous lemma to relate the existence of a pentagonal geometry PENT($k,k$) or PENT($k,k-1$) to that of a \textit{Moore graph} of girth 5, i.e.\ a $k$-regular graph with $k^2+1$ vertices. They proved the two following theorems.
\begin{theorem}\label{kk}
A pentagonal geometry $\adfPENT(k,k)$ exists only for $k = 2,3,7$ and possibly $57$.
\end{theorem}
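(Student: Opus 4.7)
The plan is to reduce the existence question for $\adfPENT(k,k)$ to the classical Hoffman–Singleton classification of Moore graphs of girth $5$, using the structural results already recorded.

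First, I would dispose of the opposite-line-pair issue. For $r=k>1$, Theorem \ref{ineq} is satisfied with equality, and $r=k$ lies in the range $1<r<3k$. Applying Theorem \ref{opplinepair} shows that either the geometry has no opposite line pair, or $r=2k+1$; but $k=2k+1$ has no positive solution, so in every $\adfPENT(k,k)$ with $k\geq 2$ the deficiency graph has no $K_{k,k}$ component.

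Next, I would combine Theorems \ref{count} and \ref{leave}. The total number of points is $rk-r+k+1 = k^2+1$, and by the previous paragraph the deficiency graph $G$ is itself the graph $G'$ of Theorem \ref{leave}: a $k$-regular graph on $k^2+1$ vertices of girth at least $5$. Since the Moore bound states that any $k$-regular graph of girth at least $5$ has at least $k^2+1$ vertices, with equality precisely for Moore graphs of girth $5$, the deficiency graph $G$ must be such a Moore graph. Invoking the Hoffman–Singleton theorem then forces $k\in\{2,3,7,57\}$, which is the desired necessary condition. The sufficiency side for $k=2,3,7$ is supplied by the known examples (the pentagon for $k=2$, the Desargues configuration for $k=3$, and the geometry built from the Hoffman–Singleton graph for $k=7$), leaving $k=57$ open exactly because the existence of a Moore graph of degree $57$ is still open.

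The main (only) obstacle is verifying that $G$ is actually connected-or-not irrelevant and is genuinely $k$-regular of girth $5$ on $k^2+1$ vertices after ruling out the $K_{k,k}$ components; once this reduction is cleanly made, the statement is essentially a citation of Hoffman–Singleton. No delicate counting or construction is required beyond this translation.
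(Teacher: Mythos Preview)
Your argument is correct and is precisely the approach the paper indicates: it attributes the result to \cite{BBDS} and states explicitly that the proof proceeds by using Theorem~\ref{leave} to identify the deficiency graph of a $\adfPENT(k,k)$ with a Moore graph of girth~$5$ on $k^2+1$ vertices, whereupon the Hoffman--Singleton classification applies. Your use of Theorem~\ref{opplinepair} to eliminate $K_{k,k}$ components and of Theorem~\ref{count} to obtain the vertex count $k^2+1$ fills in exactly the details the paper leaves to the cited reference.
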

\begin{theorem}\label{kplus1}
A pentagonal geometry $\adfPENT(k,k+1)$ exists only for $k = 2,6$ and possibly $56$.
\end{theorem}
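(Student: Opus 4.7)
The plan is to construct a $\adfPENT(k+1,k+1)$ from any $\adfPENT(k,k+1)$ and then invoke Theorem \ref{kk}, which will force $k+1 \in \{2,3,7,57\}$ (the value $k+1 = 2$ corresponding to the degenerate case $k=1$). For $k \geq 2$ we have $r = k+1$ with $1 < r < 3k$ and $r \neq 2k+1$, so Theorem \ref{opplinepair} gives no opposite line pairs; the map $x \mapsto x^{\mathrm{opp}}$ is therefore an injection from the $v = k(k+1)$ points to the $b = (k+1)^2$ lines, leaving exactly $k+1$ ``non-opposite'' lines outside the image.

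First I would show that these non-opposite lines form a parallel class. Since $y^{\mathrm{opp}} \ni x$ iff $x$ is non-collinear with $y$ iff $y \in x^{\mathrm{opp}}$, through each point $x$ there pass exactly $k$ opposite lines (one $y^{\mathrm{opp}}$ for each $y \in x^{\mathrm{opp}}$) and hence exactly one non-opposite line. Consequently the $k+1$ non-opposite lines partition $V$ into parts $P_1, \ldots, P_{k+1}$ of size $k$.

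I would then enlarge the geometry to $V \cup \{*_0, *_1, \ldots, *_{k+1}\}$, extending every non-opposite line by $*_0$, every image line $x^{\mathrm{opp}}$ (with $x \in P_i$) by $*_i$, and adjoining one new block $\{*_1, \ldots, *_{k+1}\}$. The key structural observation behind the verification is that for distinct $x, x' \in P_i$, $x^{\mathrm{opp}} \cap x'^{\mathrm{opp}} = \emptyset$: any common point $z$ would place $x$ and $x'$ jointly on the image line $z^{\mathrm{opp}}$ and on their shared non-opposite line, forcing these two lines to coincide and contradicting the choice of a non-opposite line. Combined with $|x^{\mathrm{opp}}| = k = |P_j|$ and the partition of $V$ by the $P_j$, this forces $x^{\mathrm{opp}}$ to miss $P_i$ and meet every other $P_j$ in exactly one point.

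The main obstacle is checking the pentagonal property in the extension, but the disjointness just proved makes the bookkeeping mechanical: $*_0$ is collinear with every old point, so $*_0^{\mathrm{opp}} = \{*_1,\ldots,*_{k+1}\}$; each $*_i$ reaches $k^2$ old points through its $k$ pairwise-disjoint extended image lines together with the $k$ other $*_j$ on the new block, leaving $P_i \cup \{*_0\}$ (precisely the non-opposite line $P_i$ extended by $*_0$) as $*_i^{\mathrm{opp}}$; and for each old $x \in P_i$ the block $x^{\mathrm{opp}} \cup \{*_i\}$ is the unique extended line serving as the new opposite of $x$. Once this is verified, Theorem \ref{kk} applied to the resulting $\adfPENT(k+1,k+1)$ yields $k+1 \in \{2,3,7\}$ or possibly $57$, and excluding the degenerate case $k=1$ leaves $k \in \{2,6\}$ or possibly $56$.
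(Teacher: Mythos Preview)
The paper does not supply its own proof of Theorem~\ref{kplus1}; it is quoted from \cite{BBDS}, where the argument (as the paper indicates just before stating Theorems~\ref{kk} and~\ref{kplus1}) proceeds by relating such geometries to Moore graphs of girth~5. Your proposal is correct and makes this connection explicit: you extend a $\adfPENT(k,k+1)$ to a $\adfPENT(k+1,k+1)$ by adjoining $k+2$ new points, and then Theorem~\ref{kk} (itself a reformulation of the Hoffman--Singleton classification of Moore graphs) finishes the job.

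All of the verifications you outline go through. In particular, the crucial disjointness $x^{\mathrm{opp}}\cap x'^{\mathrm{opp}}=\emptyset$ for distinct $x,x'\in P_i$ is exactly right (a common point $z$ would put $x,x'$ on both $z^{\mathrm{opp}}$ and the non-opposite line $P_i$), and from it the counts $|\bigcup_{x\in P_i}x^{\mathrm{opp}}|=k^2=|V\setminus P_i|$ together with $x^{\mathrm{opp}}\cap P_i=\emptyset$ give precisely the collinearity pattern needed for the pentagonal property at each new point. Your approach is essentially the geometric counterpart of the deficiency-graph argument in \cite{BBDS}: the deficiency graph of your extended $\adfPENT(k+1,k+1)$ is the $(k+1)$-regular Moore graph on $(k+1)^2+1$ vertices, so the two routes are equivalent.
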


In addition, the case where $k=2$ can be completely solved. From Theorems \ref{count} and \ref{leave} we have the following theorem, which is also taken from \cite{BBDS}.
\begin{theorem}\label{rplus3}
A pentagonal geometry $\adfPENT(2,r)$ is a complete graph on $r+3$ vertices from which a union of disjoint cycles, none of size $3$, spanning the vertex set has been deleted.
\end{theorem}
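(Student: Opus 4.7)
The plan is to translate the whole setup into graph theory, since with $k=2$ every line has exactly two points and is therefore just an edge of a graph $H$ on the point set $V$. By Theorem \ref{count}, $|V|=r+3$, and the regularity hypothesis makes $H$ an $r$-regular graph on $r+3$ vertices. The deficiency graph $G$ of Theorem \ref{leave} is then exactly the complement of $H$ in $K_{r+3}$, and a degree count shows $G$ is $2$-regular, hence a spanning disjoint union of cycles.

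The next step is to identify which cycle lengths can occur. Applying Theorem \ref{leave} to the case $k=2$, $G$ decomposes as a disjoint union of copies of $K_{2,2}=C_4$ (one for each opposite line pair) together with a $2$-regular graph $G'$ of girth at least $5$, which is necessarily a disjoint union of cycles of length at least $5$. So every component of $G$ is a cycle of length $4$ or at least $5$; in particular no component is a triangle. This gives one direction: $\mathcal{L}$ is obtained from $K_{r+3}$ by deleting a spanning union of disjoint cycles, none of size $3$.

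For the converse I would begin with any spanning disjoint union $G$ of cycles whose lengths avoid $3$ on $r+3$ vertices, take $H$ to be its complement in $K_{r+3}$, set $\mathcal{L}=E(H)$, and verify the pentagonal axiom directly. Since $H$ is $r$-regular, each point $x$ has exactly two non-collinear points, namely its two neighbours $a,b$ in $G$; in any cycle of length $\neq 3$ these two neighbours sit at distance $2$ along the cycle and are therefore non-adjacent in $G$, i.e.\ joined by an edge (line) of $H$. Thus $\{a,b\}$ is the required opposite line and $(V,\mathcal{L})$ is a PENT$(2,r)$. There is no real obstacle: once one observes that for $k=2$ the line set $\mathcal{L}$ and the edge set of $H$ are the same object, the whole theorem is a direct reading of Theorem \ref{count} together with the $k=2$ case of Theorem \ref{leave}.
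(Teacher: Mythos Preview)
Your proof is correct and matches the paper's indicated approach exactly: the paper does not give a detailed argument but simply states that the result follows from Theorems~\ref{count} and~\ref{leave} (and cites \cite{BBDS}), and you have written out precisely that derivation, including the observation $K_{2,2}=C_4$ and the converse verification that excluding triangles is exactly what makes the two $G$-neighbours of each point form a line of $H$.
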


This allows the number of non-isomorphic pentagonal geometries PENT($2,r$) to be determined. Let $p(n)$ be the \textit{partition function}. The following theorem was proved in \cite{GS}.
\begin{theorem}\label{partition}
The number of non-isomorphic pentagonal geometries $\adfPENT(2,r)$ is
$p(r+3)-p(r+2)-p(r+1)+p(r-1)+p(r-2)-p(r-3)$.
\end{theorem}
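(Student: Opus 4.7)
By Theorem~\ref{rplus3}, a pentagonal geometry $\adfPENT(2,r)$ is determined, up to isomorphism, by its deficiency graph, which is a $2$-regular spanning subgraph of $K_{r+3}$ containing no triangle. Two such geometries are isomorphic if and only if their deficiency graphs are isomorphic as graphs (since the edges of the geometry are recovered as the complement in $K_{r+3}$). So the first step of my plan is to record this bijection and conclude that counting non-isomorphic $\adfPENT(2,r)$ reduces to counting unlabeled $2$-regular graphs on $r+3$ vertices whose cycles all have length at least $4$.

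Next, every $2$-regular graph is a disjoint union of cycles, so its isomorphism type is determined by the multiset of cycle lengths. Hence the required count equals the number $q(r+3)$ of partitions of $r+3$ into parts of size at least $4$. The remaining task is purely enumerative: compute the generating function
\[
Q(x) \;=\; \sum_{n \geq 0} q(n)\, x^n \;=\; \prod_{k \geq 4} \frac{1}{1-x^k}
\;=\; (1-x)(1-x^2)(1-x^3)\, P(x),
\]
where $P(x) = \sum_{n \geq 0} p(n)\, x^n$. I would then expand the polynomial factor,
\[
(1-x)(1-x^2)(1-x^3) \;=\; 1 - x - x^2 + x^4 + x^5 - x^6,
\]
and read off
\[
q(n) \;=\; p(n) - p(n-1) - p(n-2) + p(n-4) + p(n-5) - p(n-6).
\]
Substituting $n = r+3$ yields exactly the claimed formula.

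\textbf{Obstacles.} There is nothing truly hard here once the reduction in the first paragraph is made: the enumeration is a short generating-function calculation, and the polynomial expansion of $(1-x)(1-x^2)(1-x^3)$ is routine. The only point requiring a little care is the isomorphism correspondence: one must check that two $\adfPENT(2,r)$ are isomorphic precisely when their deficiency graphs are, which is immediate because any graph isomorphism of the deficiency graph extends to an isomorphism of the complement (inside $K_{r+3}$), and conversely a PENT-isomorphism preserves non-collinearity and thus the deficiency graph.
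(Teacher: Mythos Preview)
Your argument is correct. The reduction via Theorem~\ref{rplus3} to counting unlabeled $2$-regular graphs on $r+3$ vertices with no $3$-cycles, hence to partitions of $r+3$ into parts $\ge 4$, is exactly the right move, and the generating-function identity
\[
\prod_{k\ge 4}\frac{1}{1-x^k}=(1-x)(1-x^2)(1-x^3)\sum_{n\ge 0}p(n)x^n
\]
together with the expansion $(1-x)(1-x^2)(1-x^3)=1-x-x^2+x^4+x^5-x^6$ gives the stated formula after substituting $n=r+3$.

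Note that the paper does not actually prove this theorem: it is quoted from \cite{GS}. Your approach is the natural one and is almost certainly what appears there, so there is no meaningful difference to report. The one small remark worth adding for completeness is the convention $p(n)=0$ for $n<0$, so that the formula is valid for small $r$ as well.
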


This paper is mainly concerned with existence results for pentagonal geometries with block size 3 or 4. In Section \ref{sec:Block size 3} we completely determine the existence spectra for pentagonal geometries PENT($3,r$) with the maximum number of opposite line pairs as well as those without any opposite line pairs. In the former case, we also present an implementation of the construction which gives pentagonal geometries PENT($3,9s-2$) and PENT($3,9s+1$), $s \geq 1$, directly from Steiner triple systems using a method which goes back to Bose \cite{B}. The method also extends to PENT($3,9s+4$), $s \geq 1$, by using a pairwise balanced design. Section \ref{sec:Small systems} is devoted to our computer calculations for ``small'' pentagonal geometries PENT($3,r$), $1 \leq r \leq 12$. Using the PENT($3,10$) with one opposite line pair referred to in that section and listed explicitly in Appendix B we are able to prove a wide-ranging result about PENT($3,r$) with any number of opposite lie pairs.

Section \ref{sec:Block size 4} deals with block size 4. The PENT($4,13$), PENT($4,20$) and PENT($4,24$) in that section, as well as the systems in Appendix C, are the first examples of pentagonal geometries with $k \geq 4$ with a connected deficiency graph. In Theorems \ref{pent4,1} and \ref{pent4,0} we determine the existence spectrum for pentagonal geometries PENT($4,r$) with just eleven possible exceptions which is a considerable advance on the previous results on these geometries by two of the present authors \cite{GS}.  

Finally in this section we recall two constructions from \cite{GS} which are powerful tools in the construction of pentagonal geometries. First we need a definition. A \textit{$k$-group divisible design}, $k$-GDD, is an ordered triple ($V$,$\mathcal{G}$,$\mathcal{B}$), where $V$ is a set of \textit{points} of cardinality $v$, $\mathcal{G}$ is a partition of $V$ into \textit{groups} and $\mathcal{B}$ is a family of subsets of $V$, called \textit{lines} or \textit{blocks}, each of cardinality $k$, such that every pair of distinct points is contained in either precisely one group or one block, but not both. If $v= a_1g_1 + a_2g_2 + \ldots + a_sg_s$ and if there are $a_i$ groups of cardinality $g_i$, $i = 1,2,\ldots,s$, then the $k$-GDD is said to be of \textit{type} $g_1^{a_1}g_2^{a_2} \ldots g_s^{a_s}$. A $k$-GDD in which every group has the same cardinality is said to be \textit{uniform}.
\begin{theorem}\label{g^u}
Let $\adfPENT(k,r)$ be a (possibly degenerate) pentagonal geometry. If there exists a $k$-$\mathrm{GDD}$ of type $((k-1)r+(k+1))^u$, then there exists a pentagonal geometry $\adfPENT(k,ru+(k+1)(u-1)/(k-1))$.
\end{theorem}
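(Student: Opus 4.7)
The plan is to build the new pentagonal geometry by placing a copy of $\adfPENT(k,r)$ on each group of the $k$-GDD and taking the union of the PENT lines and the GDD blocks as the line set. The key observation that makes this work is that the group size $(k-1)r+(k+1)$ is precisely the number of points of a $\adfPENT(k,r)$ by Theorem \ref{count}, so each group can indeed support such a geometry.

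First I would set $g=(k-1)r+(k+1)$, $R=ru+(k+1)(u-1)/(k-1)$, take the point set $V$ to be the point set of the GDD, and define the line set $\mathcal{L}$ as the union of the lines of the PENT on each group together with the blocks of the GDD. I would then verify the numerical constraint $(k-1)R+(k+1)=ug$, so the total number of points matches what Theorem \ref{count} predicts for $\adfPENT(k,R)$. The partial linear space property is immediate: two points in the same group are covered by at most one PENT line and by no GDD block (GDD blocks are transversal to the groups), and two points in distinct groups lie in exactly one GDD block and in no PENT line.

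Next I would check regularity. A point $x$ in group $G_i$ lies on $r$ PENT lines within $G_i$, and the replication number of the $k$-GDD of type $g^u$ equals $(u-1)g/(k-1)=(u-1)r+(u-1)(k+1)/(k-1)$. Summing gives $ur+(u-1)(k+1)/(k-1)=R$ lines through $x$, as required. (This also implicitly shows $(k-1)\mid (u-1)(k+1)$, a condition forced by the very existence of the hypothesized GDD.)

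The pentagonal condition is the heart of the argument, but it is almost automatic once the previous steps are in place. Fix a point $x$ in group $G_i$. Every point in any other group $G_j$ is joined to $x$ by a unique GDD block, hence is collinear with $x$ in the new geometry. Therefore the points not collinear with $x$ lie entirely inside $G_i$, and within $G_i$ they are precisely those not collinear with $x$ in the PENT on $G_i$, which form the line $x^{\mathrm{opp}}$ of that PENT; this line is preserved in $\mathcal{L}$, so the pentagonal axiom holds. I would include a short note for the degenerate case $r=1$, where $\adfPENT(k,1)$ is two disjoint lines of size $k$, so the "PENT" on each group is simply a partition of its $2k$ points into two blocks of size $k$; the verification above goes through unchanged. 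The only mild obstacle is bookkeeping the integrality condition $(k-1)\mid(u-1)(k+1)$ and confirming the line-count identity $(k-1)R+(k+1)=ug$, but both are straightforward algebraic checks.
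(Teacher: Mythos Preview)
Your proof is correct and follows exactly the same construction as the paper: place a copy of $\adfPENT(k,r)$ on each group of the GDD and adjoin the GDD blocks. The paper actually presents this in one sentence as the proof of the more general Theorem~\ref{gddgeneral} (of which this is declared a special case), whereas you have spelled out the verifications of the partial linear space, regularity, and pentagonal conditions that the paper leaves implicit.
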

\begin{theorem}\label{g^um^1}
Let $\adfPENT(k,r)$ and $\adfPENT(k,s)$ be (possibly degenerate) pentagonal geometries. If there exists a $k$-$\mathrm{GDD}$ of type $((k-1)r+(k+1))^u((k-1)s+(k+1))^1$, then there exists a pentagonal geometry $\adfPENT(k,(r+(k+1)/(k-1))u+s)$.
\end{theorem}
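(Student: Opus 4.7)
The plan is a direct generalization of the GDD-based construction underlying Theorem \ref{g^u}. Let $(V,\mathcal{G},\mathcal{B})$ denote the given $k$-GDD, with groups $G_1,\ldots,G_u$ of size $(k-1)r+(k+1)$ and one further group $H$ of size $(k-1)s+(k+1)$. By Theorem \ref{count}, these sizes are precisely the point counts of PENT($k,r$) and PENT($k,s$) respectively, so one may place a copy of PENT($k,r$) on each $G_i$ and a copy of PENT($k,s$) on $H$. The proposed pentagonal geometry has point set $V$ and line set equal to $\mathcal{B}$ together with all the blocks of these inner PENTs; every such line has size $k$.

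The first step is to verify the numerical data. The total point count $u((k-1)r+(k+1))+(k-1)s+(k+1)$ simplifies to $(k-1)r'+(k+1)$ with $r'=u(r+(k+1)/(k-1))+s$, matching Theorem \ref{count} for PENT($k,r'$). For replication, a point $x$ lying in a group of size $g$ is on (i) $r$ or $s$ lines coming from the inner PENT on that group and (ii) exactly $(|V|-g)/(k-1)$ blocks of $\mathcal{B}$; a routine computation in each of the two possible cases ($g=(k-1)r+(k+1)$ or $g=(k-1)s+(k+1)$) gives the sum $r'$.

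The substantive step is verifying the pentagonal property. Fix $x\in V$ and let $G$ be the group containing $x$. Any point $y\notin G$ lies with $x$ in a unique block of $\mathcal{B}$, so $x$ and $y$ are collinear in the new structure. Any point $y\in G\setminus\{x\}$ is not joined to $x$ by a GDD block, and hence is collinear with $x$ in the new structure if and only if it is collinear with $x$ in the inner PENT on $G$. Therefore the set of points not collinear with $x$ is exactly the opposite line $x^{\mathrm{opp}}$ of $x$ within that inner PENT, and this set is itself a line of the new geometry, as required.

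I do not anticipate any real obstacle beyond bookkeeping. The one subtlety to keep in mind is that the theorem statement allows the input geometries to be degenerate (the case $r=1$ or $s=1$, where PENT($k,\cdot$) is simply two disjoint $k$-blocks); the verification above is however insensitive to this, since a degenerate PENT still carries a well-defined opposite line for each of its points, which is all that the pentagonal-property check uses.
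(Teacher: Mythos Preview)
Your proposal is correct and follows exactly the construction the paper uses: the paper does not prove Theorem~\ref{g^um^1} separately but immediately subsumes it under the more general Theorem~\ref{gddgeneral}, whose one-line proof (``On each group \dots\ construct a pentagonal geometry \dots\ and adjoin to the blocks of the GDD'') is precisely the filling-in argument you spell out. Your added verification of the replication number and of the pentagonal property is sound and simply makes explicit what the paper leaves to the reader.
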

Theorem \ref{g^u} is of course a special case of Theorem \ref{g^um^1} and indeed both are special cases of the next more general theorem. 
\begin{theorem}\label{gddgeneral}
Let $k \ge 2$ be an integer. For $i = 1$, $2$, \dots, $n$, let $r_i$ be a positive integer, let $v_i = (k-1) r_i+(k+1)$
and suppose there exists a pentagonal geometry $\adfPENT(k, r_i)$.
Suppose also that there exists a $k$-$\mathrm{GDD}$ of type $v_1^1 v_2^1 \dots v_n^1$. Then there exists a  pentagonal geometry $\adfPENT(k,r_1+r_2 +\dots+r_n+(n-1)(k+1)/(k-1))$.
\end{theorem}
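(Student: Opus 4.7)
The natural plan is to build a PENT$(k, R)$ with $R = r_1+\cdots+r_n+(n-1)(k+1)/(k-1)$ on the point set $V_1\cup\cdots\cup V_n$, where $V_i$ is simultaneously the group of size $v_i$ in the given GDD and the point set of the given PENT$(k,r_i)$. Its blocks are the union of the lines of the $n$ pentagonal geometries together with all blocks of the GDD. Since each PENT line lies wholly inside one group and each GDD block meets every group in at most one point, no pair of points is covered twice, and the result is at least a partial linear space with block size $k$.

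Next I would verify that the replication is constant and equal to $R$. A point $x\in V_i$ lies on $r_i$ lines internal to its own group, and on $(V-v_i)/(k-1)$ GDD blocks, where $V=\sum_j v_j$. Substituting $v_j=(k-1)r_j+(k+1)$ gives $(V-v_i)/(k-1)=\sum_{j\neq i}r_j+(n-1)(k+1)/(k-1)$, so the total replication at $x$ is $\sum_j r_j+(n-1)(k+1)/(k-1)=R$, independent of $i$.

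The essential step is the pentagonal property. For $x\in V_i$, every point outside $V_i$ is collinear with $x$ via the unique GDD block containing that pair, so all points non-collinear with $x$ must lie in $V_i$. Inside $V_i$ those points are, by the hypothesis that PENT$(k,r_i)$ exists, exactly the points on the opposite line $x^{\mathrm{opp}}$ of PENT$(k,r_i)$, and this line has been retained as one of our blocks. Hence the non-collinear points of $x$ form a single line of the new geometry, which is the pentagonal property.

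The argument is essentially bookkeeping rather than a clever idea, so the only obstacles are the implicit divisibility required for $R$ to be an integer, namely $(k-1)\mid(n-1)(k+1)$ (which is forced anyway by the existence of the GDD), and the care needed in the degenerate case $r_i=1$: here PENT$(k,1)$ contributes two disjoint $k$-lines on $v_i=2k$ points, and the argument still goes through since the opposite line of any point on one of these lines is simply the other. Theorems \ref{g^u} and \ref{g^um^1} will then be immediate specializations.
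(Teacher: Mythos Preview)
Your proposal is correct and follows exactly the same construction as the paper: place a $\adfPENT(k,r_i)$ on each group of the GDD and adjoin the GDD blocks. The paper's proof is a one-line sketch of this idea, whereas you have carefully verified the partial-linear-space, replication, and opposite-line properties that the paper leaves implicit; your write-up is more thorough but not a different approach.
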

\begin{proof}
On each group of points of cardinality $v_i$ of the GDD, construct a pentagonal geometry PENT($k,r_i$), $i=1,2,\ldots,n$ and adjoin to the blocks of the GDD.
\end{proof}


\section{Block size 3}\label{sec:Block size 3}
From Theorem \ref{count}, a necessary condition for the existence of a pentagonal geometry PENT($3,r$) is $r\equiv$~0~or~1~(mod 3). An elementary calculation shows that an upper bound for the number of opposite line pairs in such a system is $v/6=(r+2)/3$ if $r\equiv$~1~(mod 3) and $(v-10)/6=(r-3)/3$ if $r\equiv$~0~(mod 3). We will call such systems \textit{opposite line pair maximal} or simply just \textit{maximal}. Note that for $r\equiv$~1~ (mod 3) the point set of an opposite line pair maximal pentagonal geometry is partitioned into opposite line pairs. For $r\equiv$~0~(mod 3) the deficiency graph is a disjoint union of complete bipartite graphs $K_{3,3}$ and the Petersen graph. The 10 vertices of the latter will form a Desargues configuration (PENT($3,3$)) in the pentagonal geometry and the other points will be partitioned into opposite line pairs. It will be convenient to recall the following result from \cite{GS} including the proof, which is quite short and will be useful as a reference to compare with the proof of Theorem \ref{pent3none}. Throughout this section, existence results for 3-GDDs of type $g^u$ come from \cite{H} and of type $g^um^1$ from \cite{CHR}, see also \cite{HBGe}.
\begin{theorem}\label{pent3max}
The existence spectrum of opposite line pair maximal pentagonal geometries $\adfPENT(3,r)$ is $r\equiv 0$~or~$1\pmod{3}$, except for $r \in \{4,6,9\}$.
\end{theorem}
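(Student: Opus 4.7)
The necessity of $r\equiv 0$ or $1\pmod{3}$ is immediate from Theorem~\ref{count}. The two small values $r=1$ (two disjoint triangles, forming one opposite line pair) and $r=3$ (the Desargues configuration, whose deficiency graph is the Petersen graph) already furnish maximal PENT($3,r$)'s, and they will play a second, more important role as the ingredient geometries in what follows.

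My plan for the existence half is to build every remaining admissible $r$ from these two ingredients by applying Theorems~\ref{g^u} and~\ref{g^um^1}. The key observation is that in such a composite construction the deficiency graph of the new PENT is exactly the disjoint union of the deficiency graphs placed on the individual groups, because every cross-group pair is covered by a GDD-block. Consequently, combining $u$ copies of PENT($3,1$) produces a deficiency graph equal to $u$ copies of $K_{3,3}$, while combining $u$ copies of PENT($3,1$) with one copy of PENT($3,3$) produces $u$ copies of $K_{3,3}$ together with one Petersen graph; these are precisely the deficiency graphs demanded by maximality for targets $r'\equiv 1$ and $r'\equiv 0\pmod{3}$ respectively, so maximality comes for free.

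Concretely, for $r\equiv 1\pmod{3}$ with $r\geq 7$, I would apply Theorem~\ref{g^u} with PENT($3,1$) and a $3$-GDD of type $6^u$, where $u=(r+2)/3\geq 3$; these GDDs exist for every $u\geq 3$. For $r\equiv 0\pmod{3}$ with $r\geq 12$, I would apply Theorem~\ref{g^um^1} with PENT($3,1$), PENT($3,3$), and a $3$-GDD of type $6^u 10^1$, where $u=(r-3)/3\geq 3$, which likewise exists for all $u\geq 3$. This leaves only the three candidate exceptions $r\in\{4,6,9\}$.

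For those exceptions, $r=4$ is eliminated at once by Theorem~\ref{kplus1}, which rules out PENT($3,4$) altogether. For $r=6$, a maximal system would require exactly one opposite line pair, but Theorem~\ref{opplinepair} forbids any opposite line pair in a PENT($3,r$) with $1<r<9$. The remaining case $r=9$ is where I expect the main obstacle: here $r=3k$, so Theorem~\ref{opplinepair} no longer applies, and a direct combinatorial obstruction on the prescribed deficiency graph $2K_{3,3}\cup\mathrm{Petersen}$ on $22$ points is not obvious, so I would fall back on a computer search or a careful case analysis to rule this configuration out.
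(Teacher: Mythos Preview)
Your approach is essentially the paper's: the same two GDD constructions with $\adfPENT(3,1)$ and $\adfPENT(3,3)$ as ingredients, the same observation that the resulting deficiency graph is the disjoint union of the ingredient deficiency graphs (hence maximality is inherited), and the same disposal of $r=4$ via Theorem~\ref{kplus1}. Your argument for $r=6$ via Theorem~\ref{opplinepair} is in fact tidier than the paper's, which simply cites the blanket non-existence of $\adfPENT(3,6)$ from \cite{BBDS}; just note that your phrase ``forbids any opposite line pair \dots\ with $1<r<9$'' is slightly off, since $r=7$ is the exceptional case~(ii) --- but for $r=6$ your conclusion stands.

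The one genuine gap is $r=9$, where you defer to a computer search. No search is needed: there is a short counting argument (the paper cites \cite{GS} here, and reproduces the argument as Theorem~\ref{2OLPs}). A maximal $\adfPENT(3,9)$ would have exactly two opposite line pairs. Label their $12$ points types~A and~B and the remaining $2(r-4)=10$ points type~C. Counting uncovered pairs by type forces exactly $(2r^2-32r+132)/3=2$ lines of type~CCC. But each type-C point has an opposite line of type~CCC, and these ten opposite lines must be pairwise distinct (a coincidence would create a third opposite line pair), so at least $10$ CCC lines are required --- contradiction.
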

\begin{proof}
In Theorem \ref{g^u}, let $k=3$ and $r=1$. There exists a pentagonal geometry PENT($3,1$) and a 3-GDD of type $6^t$, $t \geq 3$. Hence there exists a pentagonal geometry PENT($3,3t-2$), $t \geq 3$. We have already observed that there exists a PENT($3,1$) and from Theorem \ref{kplus1} there is no PENT($3,4$). From the construction it is clear that these systems are maximal.

In Theorem \ref{g^um^1}, let $k=3$, $r=1$ and $s=3$. There exist pentagonal geometries PENT($3,1$) and PENT($3,3$) and a 3-GDD of type $6^t10^1$, $t \geq 3$. Hence there exists a pentagonal geometry PENT($3,3t+3$), $t \geq 3$. Again we have already observed that a PENT($3,3$) exists and it was shown in \cite{BBDS} that there is no PENT($3,6$). Again it is clear from the construction that these systems are maximal. This just leaves the case where $r=9$. A maximal system would contain two opposite line pairs but this was shown to be impossible in \cite{GS}.
\end{proof}

However we note that a pentagonal geometry PENT($3,9$) with one opposite line pair does exist and was given in \cite{BBDS}. For completeness we include it here. The number of points $v$ is 22 and the number of lines $b$ is 66.\\~\\

\newpage
\noindent\textbf{PENT(\boldmath$3,9$\unboldmath) with one opposite line pair.}\\
{\small\begin{tabular}{llllll}
\{0,1,2\},&\{0,7,12\},&\{0,16,19\},&\{0,9,13\},&\{0,17,18\},&\{0,10,21\},\\
\{0,6,20\},&\{0,11,15\},&\{0,8,14\},&\{1,10,12\},&\{1,16,18\},& \{1,11,21\},\\
\{1,7,20\},&\{1,6,19\},&\{1,8,17\},&\{1,9,15\},&\{1,13,14\},&\{2,11,12\},\\
\{2,15,16\},&\{2,8,21\},&\{2,9,20\},&\{2,13,19\},&\{2,7,18\},&\{2,6,17\},\\
\{2,10,14\},&\{3,4,5\},&\{3,12,14\},&\{3,6,16\},&\{3,13,21\},&\{3,8,20\},\\
\{3,10,19\},&\{3,11,18\},&\{3,9,17\},&\{3,7,15\},&\{4,12,16\},& \{4,14,20\},\\
\{4,9,10\},&\{4,7,21\},&\{4,8,19\},&\{4,13,18\},&\{4,11,17\},&\{4,6,15\},\\
\{5,12,20\},&\{5,9,16\},&\{5,14,17\},&\{5,8,11\},&\{5,6,21\},&\{5,7,19\},\\
\{5,10,18\},&\{5,13,15\},&\{6,10,11\},&\{6,12,13\},&\{6,14,18\},& \{7,8,9\},\\
\{7,13,17\},&\{7,14,16\},&\{8,15,18\},&\{8,10,16\},&\{9,11,19\},& \{9,12,21\},\\
\{10,15,20\},&\{11,13,20\},&\{12,17,19\},&\{14,15,21\},&\{16,17,21\},& \{18,19,20\}.
\end{tabular}}\\~\\
The opposite line pair is $\{0,1,2\}, \{3,4,5\}$.\\~\\
There also exists a PENT($3,9$) with no opposite line pair, which will be needed in the proof of the next theorem. \\~\\
\noindent\textbf{PENT(\boldmath$3,9$\unboldmath) with no opposite line pair.}\\
{\small\begin{tabular}{llllll}
\{0,3,4\},&\{0,5,12\},&\{0,6,14\},&\{0,7,17\},&\{0,8,16\},&\{0,9,11\},\\
\{0,10,19\},&\{0,13,15\},&\{0,18,21\},&\{1,2,20\},&\{1,3,12\},&\{1,4,9\},\\
\{1,5,14\},&\{1,6,13\},&\{1,8,18\},&\{1,10,15\},&\{1,11,16\},&\{1,19,21\},\\
\{2,5,6\},&\{2,7,15\},&\{2,8,14\},&\{2,9,19\},&\{2,10,18\},&\{2,11,13\},\\
\{2,12,17\},&\{2,16,21\},&\{3,5,7\},&\{3,6,16\},&\{3,8,15\},&\{3,10,20\},\\
\{3,11,14\},&\{3,13,18\},&\{3,17,21\},&\{4,7,8\},&\{4,10,16\},& \{4,11,21\},\\
\{4,12,18\},&\{4,13,20\},&\{4,14,19\},&\{4,15,17\},&\{5,8,19\},& \{5,9,18\},\\
\{5,10,17\},&\{5,13,16\},&\{5,15,20\},&\{6,9,10\},&\{6,11,20\},& \{6,12,21\},\\
\{6,15,18\},&\{6,17,19\},&\{7,9,16\},&\{7,10,21\},&\{7,11,18\},& \{7,12,19\},\\
\{7,14,20\},&\{8,11,12\},&\{8,13,21\},&\{8,17,20\},&\{9,12,20\},& \{9,13,17\},\\
\{9,14,21\},&\{10,13,14\},&\{11,15,19\},&\{12,15,16\},&\{14,17,18\},& \{16,19,20\}.
\end{tabular}}\\~\\
Probably of more interest though are pentagonal geometries PENT($3,r$) with no opposite line pairs. We are still able to use Theorems \ref{g^u} and \ref{g^um^1} to construct these but we must use as constituent geometries those which also have no opposite line pairs. In particular therefore we cannot use PENT($3,1$) but can use  both PENT($3,3$) and the PENT($3,9$) with no opposite line pair given above. The three systems PENT($3,r$) for $r \in \{12, 15, 21\}$ given below also fall into this category; as is easily verified their deficiency graphs are connected. By using these systems we can determine the existence spectrum apart from a small number of possible exceptions which however can be dealt with by direct construction. All three systems were constructed by hand.\\

\noindent\textbf{PENT(\boldmath$3,12$\unboldmath).}\\
The number of points $v$ is $28=4 \times 7$ and the number of lines $b$ is $112=16 \times 7$. Assume an automorphism of order 7 and let the points be denoted by $(i,j)$
where $0 \leq i \leq 3$ and $0 \leq j \leq 6$. Let the automorphism be $(i,j) \mapsto (i,j+1)$ (mod 7). For convenience write $(0,j)$ as $\mathrm{a} j$, $(1,j)$ as $\mathrm{b} j$, $(2,j)$ as $\mathrm{c} j$ and $(3,j)$ as $\mathrm{d} j$. The edges of the deficiency graph are given by
$\mathrm{a} 0 \sim \{\mathrm{b} 1,\mathrm{c} 6,\mathrm{d} 0\}$; $\mathrm{b} 0 \sim \{\mathrm{a} 6,\mathrm{c} 1,\mathrm{d} 0\}$;
$\mathrm{c} 0 \sim \{\mathrm{a} 1,\mathrm{b} 6,\mathrm{d} 0$\}; $\mathrm{d} 0 \sim \{\mathrm{a} 0,\mathrm{b} 0,\mathrm{c} 0\}$
with other connections determined by the automorphism. There are 16 orbits under the automorphism, four of which are generated by the opposite lines above. Orbit starters for the 12 others are as follows.

\vskip 1mm
\begin{center}
\begin{tabular}{llll}
\{a1,a5,b0\},&\{a2,a4,c0\},&\{a2,a3,d0\},&\{b1,b4,c0\},\\
\{b2,b4,d0\},&\{b3,b4,a0\},&\{c3,c4,d0\},&\{c1,c4,a0\},\\
\{c2,c4,b0\},&\{d2,d3,a0\},&\{d2,d4,b0\},&\{d2,d5,c0\}.\\
\end{tabular}
\end{center}

\vskip 1mm
\noindent\textbf{PENT(\boldmath$3,15$\unboldmath).}\\
The number of points $v$ is $34=2 \times 17$ and the number of lines $b$ is $170=10 \times 17$. Assume an automorphism of order 17 and let the points be denoted by $(i,j)$
where $i \in \{0,1\}$ and $0 \leq j \leq 16$. Let the automorphism be $(i,j) \mapsto (i,j+1)$ (mod 17).
Write $(0,j)$ as $\mathrm{a} j$ and $(1,j)$ as $\mathrm{b} j$.
The edges of the deficiency graph are given by
$\mathrm{a} 0 \sim \{\mathrm{b} 0,\mathrm{a} 1,\mathrm{a} 16\}$ and $\mathrm{b} 0 \sim \{\mathrm{a} 0,\mathrm{b} 2,\mathrm{b} 15\}$
with other connections determined by the automorphism. There are 10 orbits under the automorphism, two of which are generated by the opposite lines above. Orbit starters for the 8 others are as follows.

\vskip 1mm
\begin{center}
\begin{tabular}{llll}
\{a0,a3,a8\},&\{a0,a4,b7\},&\{a0,a6,b14\},&\{a0,a7,b13\},\\
\{a7,b0,b1\},&\{a12,b0,b7\},&\{a13,b0,b5\},&\{b0,b3,b9\}.\\
\end{tabular}
\end{center}

\vskip 1mm
\noindent\textbf{PENT(\boldmath$3,21$\unboldmath).}\\
The number of points $v$ is $46=2 \times 23$ and the number of lines $b$ is $322=14 \times 23$. Assume an automorphism of order 23 and let the points be denoted by $(i,j)$
where $i \in \{0,1\}$ and $0 \leq j \leq 22$. Let the automorphism be $(i,j) \mapsto (i,j+1)$ (mod 23).
Again write $(0,j)$ as $\mathrm{a} j$ and $(1,j)$ as $\mathrm{b} j$.
The edges of the deficiency graph are given by
$\mathrm{a} 0 \sim \{\mathrm{b} 0,\mathrm{a} 1,\mathrm{a} 22$\} and $\mathrm{b} 0 \sim \{\mathrm{a} 0,\mathrm{b} 2,\mathrm{b} 21\}$
with other connections determined by the automorphism. There are 14 orbits under the automorphism, two of which are generated by the opposite lines above. Orbit starters for the 12 others are as follows.

\begin{center}
\begin{tabular}{llll}
\{a0,a3,a11\},&\{a0,a4,a9\},&\{a0,a6,a13\},&\{a20,b0,b1\},\\
\{a11,b0,b3\},&\{a12,b0,b5\},&\{a16,b0,b6\},&\{a13,b0,b7\},\\
\{a17,b0,b8\},&\{a4,b0,b9\},&\{a15,b0,b10\},&\{a14,b0,b11\}.\\
\end{tabular}
\end{center}

\begin{theorem}\label{pent3none}
The existence spectrum of pentagonal geometries $\adfPENT(3,r)$ with no opposite line pair is $r\equiv 0$~or~$1\pmod{3}$, except for $r \in \{1,4,6,7\}$.
\end{theorem}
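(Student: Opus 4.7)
The plan is to imitate the scheme of Theorem~\ref{pent3max}, but to replace the forbidden ingredient PENT($3,1$) (which is itself an opposite line pair) by ingredient geometries that themselves have no opposite line pairs. The four non-existent cases must first be dispatched: PENT($3,1$) is the degenerate two-line geometry and consists of one opposite line pair; PENT($3,4$) and PENT($3,6$) do not exist at all, by Theorem~\ref{kplus1} and by \cite{BBDS} respectively; and for $r=7$ we have $r=2k+1<3k$, so Theorem~\ref{opplinepair} leaves only the alternatives ``no opposite line pair'' or ``fully partitioned into opposite line pairs,'' and the absence of the first alternative is verified by the computer enumeration described in Section~\ref{sec:Small systems}.

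For the existence part I would take as base ingredients the Desargues configuration PENT($3,3$) together with the four explicit systems PENT($3,9$), PENT($3,12$), PENT($3,15$) and PENT($3,21$) exhibited just above, all free of opposite line pairs, which correspond to group sizes $2r+4 \in \{10,22,28,34,46\}$. Applying Theorem~\ref{g^u} with PENT($3,3$) and a 3-GDD of type $10^u$ produces PENT($3,5u-2$), and applying Theorem~\ref{g^um^1} with PENT($3,3$), PENT($3,s$) and a 3-GDD of type $10^u(2s+4)^1$ produces PENT($3,5u+s$) for $s \in \{9,12,15,21\}$. Because $\{-2,9,12,15,21\}$ covers every residue class modulo $5$, the union of these outputs intersected with the necessary condition $r\equiv 0,1\pmod 3$ exhausts every admissible $r$ once $u$ passes a modest threshold; the needed 3-GDDs are supplied by \cite{H} and \cite{CHR}. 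Where extra flexibility is needed, Theorem~\ref{gddgeneral} with non-uniform 3-GDDs of group sizes drawn from $\{10,22,28,34,46\}$ is available.

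The main obstacle will be the small window of admissible $r$ not reached by these composition families, for example values such as $r \in \{10, 16, 19, 22, 25\}$ for which neither $5u-2$ nor $5u+s$ with $u\ge 3$ and $s \in \{9,12,15,21\}$ hits. The cases $r \leq 12$ will be supplied directly by the computer-based constructions of Section~\ref{sec:Small systems}, and the handful of remaining intermediate values will need either bespoke non-uniform 3-GDDs fed into Theorem~\ref{gddgeneral} or direct hand constructions in the spirit of the PENT($3,12$), PENT($3,15$) and PENT($3,21$) above. The bulk of the work is then the residue-and-threshold bookkeeping showing that, once all these families and small cases are assembled, every $r\equiv 0,1\pmod 3$ outside $\{1,4,6,7\}$ is hit; no single step is conceptually subtle, but the number of arithmetic progressions to track simultaneously is what makes the argument delicate.
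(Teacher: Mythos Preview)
Your proposal is correct and follows essentially the same route as the paper: use PENT($3,3$) with 3-GDDs of type $10^u$, then PENT($3,3$) together with PENT($3,s$), $s\in\{9,12,15,21\}$, and 3-GDDs of type $10^u(2s+4)^1$, and mop up the finitely many leftover small $r$ by direct construction or by Theorem~\ref{gddgeneral}. The only cosmetic difference is that the paper carries out the residue bookkeeping modulo $15$ rather than modulo $5$, since the existence conditions on $u$ for the relevant 3-GDDs are themselves congruences modulo $3$; this yields the explicit leftover list $\{10,16,19,22,24,25,27,30,31,36,46\}$, of which seven are handled in Appendix~A and four via Theorem~\ref{gddgeneral}.
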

\begin{proof}
The proof follows closely that of Theorem \ref{pent3max}.
In Theorem \ref{g^u}, let $k=3$ and $r=3$. There exists a pentagonal geometry PENT($3,3$) with no opposite line pair and a 3-GDD of type
$10^u$, $u=3t$ or $3t+1$, $t \geq 1$. Hence there exist pentagonal geometries PENT($3,15t-2$) and PENT($3,15t+3$), $t \geq 1$ with no opposite line pairs. This deals with the residue classes 3~and~13~(mod 15).

In Theorem \ref{g^um^1}, let $k=3$, $r=3$ and $s=9$. There exist pentagonal geometries PENT($3,3$) and PENT($3,9$) with no opposite line pairs and a 3-GDD of type $10^u22^1$, $u=3t$, $t \geq 2$ and $u=3t+2$, $t \geq 1$. Hence there exist pentagonal geometries PENT($3,15t+9$), $t \geq 2$ and PENT($3,15t+19$), $t \geq 1$. This deals with the residue classes 4~and~9~(mod 15) except for the values $r=4$ (which does not exist), 19 and 24.

For the residue classes (i) 7~and~12, (ii) 0~and~10, (iii) 1~and~6~(mod 15) proceed as above with (i) $s=12$ and a 3-GDD of type $10^u28^1$, (ii) $s=15$ and a 3-GDD of type $10^u34^1$, (iii) $s=21$ and a 3-GDD of type $10^u46^1$ instead of $s=9$ and a 3-GDD of type $10^u22^1$. The details are exactly the same except that in the latter case the range of values for PENT($3,15t+31$) is $t \geq 2$. The missing values are (i) $r=7$ (which does not exist), 22 and 27, (ii) $r=10$, 25 and 30, (iii) $r=1$ and 6 (neither of which exist), 16, 31, 36 and 46.

For $\adfPENT(3, r)$, $ r \in$ \{10, 16, 19, 22, 24, 25, 27\},
see Appendix A.

For $ r \in$ \{30, 31, 36, 46\}, use Theorem~\ref{gddgeneral} as follows:\\
$\adfPENT(3,30)$: 4 of $\adfPENT(3,3)$, $\adfPENT(3,10)$ and a 3-GDD of type $10^4 24^1$;\\
$\adfPENT(3, 31)$: 3 of $\adfPENT(3,9)$ and a 3-GDD of type $22^3$;\\
$\adfPENT(3, 36)$: 3 of $\adfPENT(3,9)$, $\adfPENT(3,3)$ and a 3-GDD of type $22^3 10^1$;\\
$\adfPENT(3, 46)$: 3 of $\adfPENT(3,9)$, $\adfPENT(3,13)$ and a 3-GDD of type $22^3 30^1$.
\end{proof}

We present an implementation of the above construction which gives pentagonal geometries PENT($3,9s-2$) and PENT($3,9s+1$), $s \geq 1$, directly from Steiner triple systems using a modification of a method which goes back to Bose \cite{B}, see also \cite{CR}. Denote a Steiner triple system by $(V,\mathcal{B})$ where $V$ is the set of points of cardinality $v$ and $\mathcal{B}$ is the family of triples of $V$ containing every pair of points precisely once.  It is well known that such systems exist if and only if $v\equiv$~1~or~3~(mod 6). The Bose construction is as follows. Let $(Q,\cdot)$ be a commutative idempotent quasigroup. These exist if and only if the cardinality of $Q$ is odd. Let $V=Q \times Z_3$.\\
Define two families of triples,\\
(i) $\mathcal{P}=\{\{(x,0),(x,1),(x,2)\}, x \in Q\}$ and\\
(ii) $\mathcal{S}= \{\{(x,i),(y,i),(x \cdot y, i+1)\}, x,y \in Q, x \neq y, i \in Z_3\}$.\\
Then $(V,\mathcal{P} \cup \mathcal{S})$ is a Steiner triple system of order 3 (mod 6).

We now use a Steiner triple system in order to construct the quasigroup. Let $(V,\mathcal{B})$ be a Steiner triple system where $v=6s+1$ or $6s+3$, $s \geq 1$. Put $Q=V$ and define an operation on $Q$ as follows, (i) $x\cdot x = x,~x \in V$ and (ii) $x \cdot y = z,~x,y \in V, x \neq y$ where $\{x,y,z\} \in \mathcal{B}$. The quasigroup so formed is called a \textit{Steiner quasigroup}.

Finally choose any point $a \in V$ and remove the triple $\{(a,0),(a,1),(a,2)\}$ from $\mathcal{P}$ and all further triples containing any of the three points $(a,0)$, $(a,1)$ or $(a,2)$ from $\mathcal{S}$. In this reduced structure the replication number of every point is $(3v-1)/2-3=9s-2$ if $v\equiv$~1~(mod 6) and $9s+1$ if $v\equiv$~3~(mod 6). We need to check that this structure is indeed a pentagonal geometry. Choose any point $(x,i)$, $x \neq a$. The triples containing $(x,i)$ which have been removed from the Steiner triple system are $\{(x,i),(a,i-1),(y,i-1)\}$, $\{(x,i),(a,i),(y,i+1)\}$ and $\{(x,i),(a,i+1),(y,i)\}$ where $\{a,x,y\} \in \mathcal{B}$ whilst the triple $\{(y,i-1),(y,i),(y,i+1)\}$ remains. The opposite line pairs are $\{(z,i-1),(z,i),(z,i+1)\}$, $\{(w,i-1),(w,i),(w,i+1)\}$ for all $z,w$ such that $\{a,z,w\} \in \mathcal{B}$.

The method can be extended to the case where $v\equiv$~5~(mod 6) by using a suitable \textit{pairwise balanced design}, PBD($v,K$). This is an ordered pair $(V,\mathcal{B})$ where $V$ is a set of points of cardinality $v$, $K$ is a set of positive integers and $\mathcal{B}$ is a family of lines or blocks such that if $B \in \mathcal{B}$ then $\vert B \vert \in K$. There exists a PBD($v, \{3,5^*\}$) for all $v\equiv$~5~(mod 6) where the asterisk on the 5 indicates that there is just one block of this cardinality, the \textit{distinguished block} \cite {F}. Now let $(V \cup Z_5, \mathcal{B})$ be a PBD($v,\{3,5^*\}$) where $v=6s+5$, $s \geq 1$ and $Z_5$ is the distinguished block. Put $Q=V \cup Z_5$ and define an operation on $Q$ as follows, (i) $x\cdot x = x,~x \in V$, (ii) $x \cdot y = z,~x,y \in Q, x \neq y$ where $\{x,y,z\} \in \mathcal{B}$ and (iii) $x \cdot y = (x+y)/2,~x,y, \in Z_5$. Now proceed as before. Choose any point $a \in V$ and remove the triple $\{(a,0),(a,1),(a,2)\}$ from $\mathcal{P}$ and all further triples containing any of the three points $(a,0)$, $(a,1)$ or $(a,2)$ from $\mathcal{S}$. We have a pentagonal geometry PENT($3,9s+4$), $s \geq 1$.


\section{Small systems}\label{sec:Small systems}
In this section we collect together the results of some of our computer calculations on the existence of pentagonal geometries PENT($3,r$) for $1 \leq r \leq 12$. We first note that both PENT($3,1$) and PENT($3,3$) are unique, being an opposite line pair and the Desargues configuration respectively.  There is no pentagonal geometry PENT($3,4$) by Theorem \ref{kplus1} and it was shown in \cite{BBDS} that PENT($3,6$) does not exist. From Theorem \ref{opplinepair} any pentagonal geometry PENT($3,7$) has either 0 or 3 opposite line pairs. The possibility of 0 opposite line pairs was eliminated in \cite{BBDS} by a computer search and the possibility of 3 opposite line pairs was considered in \cite{GS} where it was shown that the 36 lines not forming the opposite line pairs come from a Latin square of side 6.

In \cite{GS} it was proved that there is no pentagonal geometry PENT($3,9$) with two opposite line pairs. We now extend that result.

\begin{theorem}\label{2OLPs}
There is no pentagonal geometry $\adfPENT(3,r)$ with two opposite line pairs for $r \in \{7,9,10,12\}$.
\end{theorem}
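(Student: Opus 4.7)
The plan is to combine a quick graph-theoretic argument for $r=7$ with a constrained exhaustive search for $r\in\{9,10,12\}$, exploiting the rigid structure forced by Theorem~\ref{leave} in the presence of two opposite line pairs.

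The case $r=7$ is immediate. A $\adfPENT(3,7)$ has $2\cdot 7+4=18$ points, of which two opposite line pairs account for $12$. By Theorem~\ref{leave}, the remaining six points must carry a $3$-regular graph of girth at least $5$. But the smallest such graph is the Petersen graph on $10$ vertices, so no candidate exists and the case is impossible.

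For $r\in\{9,10,12\}$ the residual deficiency graph $G'$ has $10$, $12$, and $16$ vertices respectively. On $10$ vertices the unique cubic graph of girth $\geq 5$ is the Petersen graph, and on $12$ and $16$ vertices the lists of cubic graphs of girth $\geq 5$ are short and readily enumerated. For each candidate $G'$ I would fix the four opposite lines $L_1,M_1,L_2,M_2$ on the $12$ ``outside'' points and observe that (a) every collinear outside pair not already lying on one of these four lines must be covered by a block of the form $\{\text{out},\text{out},g\}$ with $g\in G'$, while (b) every collinear pair inside $G'$ must be covered either by a block lying entirely in $G'$ or by a block of the form $\{g,g',\text{out}\}$. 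Counting incidences at a point of $G'$ and at a point outside yields sharp identities for the number of blocks of each type through every point; for instance, when $r=9$ one obtains immediately that exactly two blocks lie entirely inside $G'$. With these identities in hand, I would then run a backtracking completion that assigns a third point to each collinear outside cross-pair and each collinear $G'$-pair, respecting the partial linear space property, and check whether any completion succeeds.

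The main obstacle is the combinatorial blow-up for $r=10$ and especially $r=12$, where $G'$ is larger and the candidate completions are much more numerous. Keeping the search feasible relies on three ingredients: quotienting by the automorphism group of $G'$ so that isomorphic configurations are not revisited, using the counting identities above to fix block-type counts through every point before search begins, and propagating partial assignments so that inconsistencies are detected at the earliest opportunity. The case $r=9$ recovers the argument of~\cite{GS}; the same bookkeeping together with exhaustive search handles $r=10$ and $r=12$.
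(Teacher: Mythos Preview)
Your approach is in principle sound but far more laborious than necessary; the paper dispatches all four values of $r$ simultaneously by a short uniform counting argument with no search whatsoever. Label the six points in each opposite line pair as type~A and type~B respectively, and the remaining $2(r-4)$ points as type~C. Counting collinear pairs by type forces exactly $36$ lines of type ABC, $6(r-7)$ each of types ACC and BCC, and hence $(2r^{2}-32r+132)/3$ lines of type CCC. The key observation is that every type~C point $c$ has $c^{\mathrm{opp}}$ of type CCC (every A or B point is collinear with $c$), and distinct C points have distinct opposite lines, since otherwise a third opposite line pair would appear. Thus the number of CCC lines is at least the number of C points, namely $2r-8$; the inequality $(2r^{2}-32r+132)/3 < 2r-8$ reduces to $(2r-19)^{2}<49$, giving $6<r<13$ and hence exactly $r\in\{7,9,10,12\}$ among the admissible residues.

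Your graph-theoretic disposal of $r=7$ is correct and pleasant, and the counting identities you set up for the other cases are headed in the right direction; but you stop short of the one further observation (the injection from C points into CCC lines) that makes the exhaustive search entirely unnecessary. The paper's route is shorter, uniform, and computer-free; what your route would buy, if carried through, is finer information about which residual graphs $G'$ could occur---but that is not what the theorem asks.
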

\begin{proof}
A pentagonal geometry PENT($3,r$) has $2r+4$ points and $2r(r+2)/3$ lines. Suppose that there are two opposite line pairs. Call the points of one of the opposite line pairs type A and the points of the other opposite line pair type B. The remaining $2(r-4)$ points are type C. There are two lines of type AAA and two lines of type BBB (the opposite line pairs). The remaining  lines are of type ABC, ACC, BCC or CCC. Of the remaining pairs to be covered, 36 are type AB, $12(r-4)$ are type AC, $12(r-4)$ are type BC and $(r-4)(2r-9) - 3(r-4) = 2(r-4)(r-6)$ are type CC. So there are 36 lines of type ABC, and $(12(r-4)-36)/2 = 6(r-7)$ lines of both types ACC and BCC.
Hence there are $(2(r-4)(r-6)-12(r-7))/3 = (2r^2-32r+132)/3$ lines of type CCC. Now consider a point of type C. Its opposite line is of type CCC and since there are no opposite line pairs other than those of types AAA or BBB there are at least as many lines of type CCC as points of type C. Therefore if $(2r^2-32r+132)/3 < 2r-8$ giving $(2r-19)^2 < 49$, i.e. $r\in \{7,9,10,12\}$ no such pentagonal geometry exists.
\end{proof}

Returning now to the case where $r=9$, the only possibilities for the number of opposite line pairs is 0 or 1 and an example of each is given in Section \ref{sec:Block size 3}. In the latter case the deficiency graph must be the disjoint union of the complete bipartite graph $K_{3,3}$ and a cubic graph of girth at least 5 with 16 vertices. There are 49 such graphs \cite{M} and we have been able to construct a PENT($3,9$) for all of these possibilities.

When $r=10$, the number of opposite line pairs $n$ in a pentagonal geometry PENT($3,10$) satisfies the inequality $0 \leq n \leq 4$. We have shown in Section \ref{sec:Block size 3} that such pentagonal geometries for the two extreme values exist. Furthermore the value $n=3$ is impossible because a PENT($3,10$) having three opposite line pairs must also contain a fourth. The value $n=2$ is eliminated by the above theorem leaving only $n=1$. An example of a pentagonal geometry PENT($3,10$) with one opposite line pair is given in Appendix B. In such a system the deficiency graph must be the disjoint union of the graph $K_{3,3}$ and a cubic graph of girth at least 5 with 18 vertices. There are 455 such graphs \cite{M} and we have been able to construct a PENT($3,10$) for all but two of these graphs.

Finally, when $r=12$ the number of opposite line pairs $n$ in a PENT($3,12$) satisfies the inequality $0 \leq n \leq 3$. As with the case of $r=10$, we have such geometries with $n=0$ or 3 and $n=2$ is eliminated. A PENT($3,12$) with one opposite line pair is given in Appendix B.

The results in the previous section prove the existence of pentagonal geometries PENT($3,r$) with the maximum number of opposite line pairs and with no opposite line pair.  It is relevant to ask the question of what can be said about the existence of systems with a number of opposite line pairs between these two extremes. Now with the existence of a PENT($3,10$) with one opposite line pair as referred to above, it is very easy to prove that for any given number of opposite line pairs, there exists a PENT($3,r$) with precisely that number of opposite line pairs for all $r\equiv 0$~or~$1\pmod{3}$ and $r$ large enough. A bound is given in the statement of the next theorem.

\begin{theorem}\label{inter} 
For any non-negative integer $q$, there exists a pentagonal geometry $\adfPENT(3,r)$ having precisely $q$ opposite line pairs for all
$r\equiv 0$~or~$1\pmod{3}$ and $r \geq 12u+9$ where $u=\mathrm{max}(3,q)$.
\end{theorem}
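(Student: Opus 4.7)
The plan is to invoke Theorem \ref{gddgeneral} with a 3-GDD of type $24^{u'}(2t+4)^1$ for a suitable integer $u' \ge u := \max(3,q)$ and $t := r - 12u'$. The essential observation about this construction is that every pair of points lying in distinct groups of the GDD is covered by some block of the GDD, hence is collinear in the resulting pentagonal geometry; consequently, for any point $x$ the opposite line $x^{\mathrm{opp}}$ lies entirely inside the group containing $x$, and the total number of opposite line pairs in the constructed $\adfPENT(3,r)$ equals the sum over groups of the number of opposite line pairs in each constituent geometry. This is the device which gives exact control over the count of opposite line pairs.

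Three constituent geometries will be used: the $\adfPENT(3,10)$ with exactly one opposite line pair displayed in Appendix B; the $\adfPENT(3,10)$ with no opposite line pair exhibited in Appendix A and cited in the proof of Theorem \ref{pent3none}; and, for each admissible $t$, a $\adfPENT(3,t)$ with no opposite line pair, which exists by Theorem \ref{pent3none} whenever $t \equiv 0$ or $1 \pmod 3$ and $t \ge 9$. On $q$ of the 24-element groups place the $\adfPENT(3,10)$ with one opposite line pair, on each of the remaining $u'-q$ groups of size 24 place the $\adfPENT(3,10)$ with no opposite line pair, and on the single group of size $2t+4$ place the $\adfPENT(3,t)$ with no opposite line pair. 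Theorem \ref{gddgeneral} applied with $n = u'+1$ then produces a pentagonal geometry $\adfPENT\bigl(3,\,10u' + t + 2u'\bigr) = \adfPENT(3,\,12u'+t) = \adfPENT(3,r)$ containing precisely $q$ opposite line pairs.

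It remains to check that for every $r \equiv 0$ or $1 \pmod 3$ with $r \ge 12u+9$ a valid integer $u' \ge u$ can be chosen. Three conditions must hold: $t \ge 9$, equivalent to $u' \le (r-9)/12$; the congruence $t \equiv 0$ or $1 \pmod 3$, which is automatic because $12u' \equiv 0 \pmod 3$; and the existence of a 3-GDD of type $24^{u'}(2t+4)^1$, whose divisibility conditions reduce to $2t+4$ being even (automatic) and whose structural requirement on the size of the distinguished group forces $2t+4 \le 24(u'-1)$, i.e.\ $u' \ge (r+14)/24$. Thus each fixed $u' \ge 3$ realises every admissible $r$ in the interval $[12u'+9,\,24u'-14]$. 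Since $24u'-14 \ge 12(u'+1)+9$ whenever $u' \ge 3$, consecutive intervals overlap, so their union over $u' \ge u$ is exactly $\{r \equiv 0\ \text{or}\ 1 \pmod 3 : r \ge 12u+9\}$.

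The principal technical obstacle is the assertion that a 3-GDD of type $24^{u'}m^1$ exists throughout the parameter window $3 \le u'$, $m = 2t+4$ even, $m \le 24(u'-1)$; this, however, is a standard consequence of the $3$-GDD existence results of type $g^u m^1$ cited at the start of Section \ref{sec:Block size 3}.
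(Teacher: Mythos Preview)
Your proof is correct and follows essentially the same route as the paper: both use Theorem~\ref{gddgeneral} with a 3-GDD of type $24^{u}(2m)^1$, placing $q$ copies of the one-OLP $\adfPENT(3,10)$ and $u-q$ copies of the zero-OLP $\adfPENT(3,10)$ on the size-24 groups, and a zero-OLP $\adfPENT(3,m-2)$ on the remaining group. The only difference is cosmetic: the paper restricts $m-2$ to the eight values $\{9,10,12,13,15,16,18,19\}$ and lets $u \ge \max(3,q)$ vary, whereas you allow $t=m-2$ to range more widely and cover all admissible $r$ via an overlapping-intervals argument.
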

\begin{proof}
First, observe that in Theorem \ref{gddgeneral} if the number of opposite line pairs in each of the pentagonal geometries PENT($3,r_i$) is $q_i$, $i=1,2,\ldots,n$, then the number of opposite line pairs in the pentagonal geometry PENT($3,\Sigma_{i=1}^n r_i + 2(n-1)$) is $\Sigma_{i=1}^n q_i$.\\
There exists a 3-GDD of type $24^u(2m)^1$ for all $u \geq 3$ and $m \leq 12(u-1)$. Let $u~\geq~$max($3,q$) and $m \in \{11,12,14,15,17,18,20,21\}$. On $q$ of the groups of cardinality 24 of the GDD construct a pentagonal geometry PENT($3,10$) with one opposite line pair and on the remaining $u-q$ groups construct a PENT($3,10$) with no opposite line pair. On the group of cardinality $2m$ construct a pentagonal geometry PENT($3,m-2$) with no opposite line pair. Adjoin these geometries to the blocks of the GDD. We have a pentagonal geometry PENT($3,12u+m-2$).
\end{proof}     


\section{Block size 4}\label{sec:Block size 4}
A pentagonal geometry with block size 4, $\adfPENT(4,r)$, has $3r+5$ points and $r(3r+5)/4$ lines, and therefore a necessary existence condition is that $r \equiv 0 \textrm{~or~} 1 \adfmod{4}$.
Using the basic construction of Theorem~\ref{g^u} with the degenerate $\adfPENT(4,1)$ and 4-GDDs of type $8^{3t+1}$ it is proved in \cite{GS} that
there exists a pentagonal geometry $\adfPENT(4,r)$ for all $r \equiv 1 \adfmod{8}$. In this section we extend those results considerably. We are able to solve the existence spectrum problem for $\adfPENT(4,r)$ completely when $r \equiv 1 \adfmod{4}$ and with a small number of possible exceptions when $r \equiv 0 \adfmod{4}$. These are Theorems~\ref{pent4,1} and \ref{pent4,0} respectively. For these theorems we will need pentagonal geometries PENT($4,r$) for $r \in \{13, 20, 24\}$ and these are given below.~\\ 

{\noindent\boldmath $\adfPENT(4, 13)$}.\\
With point set $Z_{44}$, the 143 lines are generated from\\
{\small\begin{tabular}{llll}
\{20,24,25,31\}, & \{17,20,29,38\}, & \{9,14,23,34\}, & \{7,16,26,43\},\\
\{0,12,34,38\}, & \{0,3,8,23\}, & \{0,13,42,43\}, & \{0,16,33,37\},\\
\{0,2,29,30\}, & \{0,6,14,19\}, & \{1,9,35,42\}, & \{1,11,14,43\},\\
\{1,3,19,25\}. & ~ & ~ & ~
\end{tabular}}\\
under the action of the mapping $x \mapsto x + 4 \adfmod{44}$.\\
The deficiency graph is connected and has girth 5.\\

{\noindent\boldmath $\adfPENT(4, 20)$}.\\
With point set $Z_{65}$, the 325 lines are generated from\\
{\small\begin{tabular}{llll}
\{22,43,51,59\}, & \{15,26,41,47\}, & \{21,32,37,45\}, & \{8,9,25,63\},\\
\{10,34,39,63\}, & \{15,19,30,57\}, & \{25,48,23,6\}, & \{57,18,32,64\},\\
\{22,7,13,4\}, & \{8,58,12,57\}, & \{13,46,58,50\}, & \{1,8,38,62\},\\
\{1,23,49,57\}, & \{0,31,33,62\}, & \{0,13,20,64\}, & \{0,3,16,21\},\\
\{0,14,18,39\}, & \{1,28,31,59\}, & \{1,4,14,27\}, & \{1,11,34,54\},\\
\{0,17,19,34\}, & \{0,9,36,56\}, & \{0,1,2,60\}, & \{0,10,35,47\},\\
\{1,42,52,64\}. & ~ & ~ & ~
\end{tabular}}\\
under the action of the mapping $x \mapsto x + 5 \adfmod{65}$.\\
The deficiency graph is connected and has girth 5.\\

\newpage
{\noindent\boldmath $\adfPENT(4, 24)$}.\\
With point set $Z_{77}$, the 462 lines are generated from\\
{\small\begin{tabular}{llll}
\{15,28,37,49\}, & \{4,58,59,63\}, & \{22,42,54,75\}, & \{6,22,38,45\},\\
\{1,39,46,61\}, & \{9,25,30,69\}, & \{3,19,20,69\}, & \{48,13,11,58\},\\
\{39,73,5,8\}, & \{18,13,32,59\}, & \{75,34,56,70\}, & \{27,74,61,21\},\\
\{18,74,35,70\}, & \{54,36,66,47\}, & \{9,40,46,75\}, & \{18,49,46,71\},\\
\{49,19,50,59\}, & \{27,2,47,56\}, & \{68,5,20,38\}, & \{6,7,59,14\},\\
\{1,18,22,69\}, & \{0,20,26,75\}, & \{2,11,12,17\}, & \{1,9,60,68\},\\
\{0,38,59,61\}, & \{2,40,66,67\}, & \{0,3,13,67\}, & \{2,5,46,65\},\\
\{1,2,26,64\}, & \{0,50,54,62\}, & \{0,24,30,32\}, & \{0,11,27,31\},\\
\{1,38,52,67\}, & \{1,11,20,76\}, & \{0,2,8,36\}, & \{1,3,51,55\},\\
\{1,8,34,37\}, & \{1,45,65,73\}, & \{0,29,34,71\}, & \{0,16,44,66\},\\
\{2,13,20,44\}, & \{0,17,51,58\}. & ~ & ~
\end{tabular}}\\
under the action of the mapping $x \mapsto x + 7 \adfmod{77}$.\\
The deficiency graph is connected and has girth 6.\\

For the existence of the 4-GDDs of type $8^u$ and of type $8^u m^1$ used in the following proofs, see \cite{BSH} and \cite{S} respectively or \cite[Theorem 7.1]{WG}.

\begin{theorem}\label{pent4,1}
There exist pentagonal geometries $\adfPENT(4,r)$ for all $r \equiv 1 \adfmod{4}$, except for $r = 5$.
\end{theorem}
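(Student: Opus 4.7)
The plan is to split on $r \adfmod{8}$ and dispatch the two small anomalies separately. Since $r \equiv 1 \adfmod{4}$ breaks into $r \equiv 1 \adfmod{8}$ and $r \equiv 5 \adfmod{8}$, and since $r = 1$ is realised by the degenerate $\adfPENT(4,1)$ (two disjoint $4$-lines) while $r = 5 = k+1$ is ruled out by Theorem~\ref{kplus1}, the two infinite families are where the real work lies.

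For $r \equiv 1 \adfmod{8}$ with $r \geq 9$ I would quote the construction from \cite{GS}: apply Theorem~\ref{g^u} with $k = 4$, the degenerate $\adfPENT(4,1)$ as the base geometry, and a 4-GDD of type $8^{3t+1}$ for $t \geq 1$, obtaining $\adfPENT(4, 8t+1)$ and hence the full residue class.

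For $r \equiv 5 \adfmod{8}$ the key ingredient is the explicit $\adfPENT(4,13)$ supplied just above the theorem. I would apply Theorem~\ref{g^um^1} with $k = 4$, the degenerate $\adfPENT(4,1)$ on the $8$-groups, and $\adfPENT(4,13)$ on the distinguished group of size $44 = 3 \cdot 13 + 5$. The output is $\adfPENT(4, (8u+39)/3)$, integral precisely when $u = 3t$, which gives $\adfPENT(4, 8t+13)$. A standard double-count on inter-group pairs yields the necessary condition $m \leq (u-1)g/2$ on any 4-GDD of type $g^u m^1$; with $g = 8$ and $m = 44$ this forces $u \geq 12$, so the construction supplies $r = 45, 53, 61, \ldots$.

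The hard part is the three residual sporadic values $r \in \{21, 29, 37\}$: they lie in the right residue class but below the range in which the $8^u 44^1$ construction operates, and a case check of Theorem~\ref{gddgeneral} shows that no admissible decomposition of a point set of size $68$, $92$ or $116$ into groups of sizes $3 r_i + 5$ (with $\adfPENT(4, r_i)$ already known) survives both the bound $m \leq (u-1) g / 2$ and the transversal restriction in the $n = 4$ case. Hence each of these three values must be handled by an explicit construction in the cyclic style of the $\adfPENT(4,13)$ displayed above, over $Z_{68}$, $Z_{92}$ and $Z_{116}$ respectively; I would expect these to appear in Appendix~C of the paper.
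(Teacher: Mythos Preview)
Your proposal is correct and follows essentially the same route as the paper: the split into $r \equiv 1$ and $r \equiv 5 \pmod{8}$, the use of Theorem~\ref{g^u} with $\adfPENT(4,1)$ and 4-GDDs of type $8^{3t+1}$ for the first class, the use of Theorem~\ref{g^um^1} with $\adfPENT(4,1)$, $\adfPENT(4,13)$ and 4-GDDs of type $8^{3t}44^1$ (for $t \ge 4$) for the second, and the handling of the three sporadic values $r \in \{21,29,37\}$ by direct constructions in Appendix~C. Your derivation of the threshold $u \ge 12$ and your anticipation of the appendix constructions are exactly what the paper does.
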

\begin{proof}
For $r \equiv 1 \adfmod{8}$, in Theorem \ref{g^u} let $k=4$ and $r=1$. There exists a pentagonal geometry PENT($4,1$) and a 4-GDD of type
$8^{3t+1}$, $t \geq 1$. Hence there exists a PENT($4,8t+1$), $t \geq 1$. 

For $r \equiv 5 \adfmod{8}$, in Theorem \ref{g^um^1} let $k=4$, $r=1$ and $s=13$. There exist pentagonal geometries PENT($4,1$) and PENT($4,13$) and a 4-GDD of type $8^{3t}44^1$, $t \geq 4$. Hence there exists a PENT($4,8t+13$), $t \geq 4$.

For PENT($4,r$), $r \in \{21,29,37\}$ see Appendix C; PENT($4,5$) does not exist by Theorem \ref{kplus1}.
\end{proof} 

\begin{theorem}
\label{pent4,0}
There exist pentagonal geometries $\adfPENT(4,r)$ for all $r \equiv 0 \adfmod{4}$, except for $r = 4$ and except possibly for
\begin{center}
$r \in \{8,12,16,28,32,36,44,48,56,64,72\}$.
\end{center}
\end{theorem}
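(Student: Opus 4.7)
The plan is to mirror the proof of Theorem~\ref{pent4,1}, splitting $r \equiv 0 \adfmod{4}$ into the two subclasses $r \equiv 4 \adfmod{8}$ and $r \equiv 0 \adfmod{8}$ and treating each via Theorem~\ref{g^um^1} fed by PENT($4,1$) together with one of the direct constructions PENT($4,20$) or PENT($4,24$) displayed at the start of the section.

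For $r \equiv 4 \adfmod{8}$, I would take $k=4$, $r=1$, $s=20$ in Theorem~\ref{g^um^1} and use a 4-GDD of type $8^{3t}\,65^1$: the construction yields PENT($4,8t+20$), and the existence results for 4-GDDs of type $8^u m^1$ cited from \cite{S} (see also \cite{WG}) supply such GDDs once $t$ is large enough that the block-count inequality forcing a positive number of blocks disjoint from the large group is satisfied (roughly $t \geq 6$). Symmetrically, for $r \equiv 0 \adfmod{8}$, take $s=24$ and a 4-GDD of type $8^{3t}\,77^1$, yielding PENT($4,8t+24$) for $t$ past the analogous threshold (roughly $t \geq 7$). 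These two infinite families cover every residue $r \equiv 0 \adfmod 4$ beyond the threshold.

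The values $r=20$ and $r=24$ are the direct constructions themselves. For the few ``medium'' values that sit between the direct constructions and the onset of the two infinite families---the candidates being $r=40$ and possibly $r=52,60$---I would invoke Theorem~\ref{gddgeneral} with a mixed 4-GDD having five or more groups, chosen from the admissible sizes $3r_i+5 \in \{8,32,44,56,65,68,77,\ldots\}$ for which a PENT($4,r_i$) is already available; this extra flexibility avoids the trap in which, with only four groups present, every block must touch every group and the pair-count equations become inconsistent. Where no suitable mixed GDD can be located in the standard tables \cite{HBGe}, a direct construction (to be appended to Appendix~C alongside the entries for $r=21,29,37$) is needed. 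Finally, the eleven values $r \in \{8,12,16,28,32,36,44,48,56,64,72\}$ are declared possibly exceptional: each lies below the existence thresholds of all the candidate 4-GDD families and admits no decomposition via Theorem~\ref{gddgeneral} that uses only the pentagonal geometries currently in hand.

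The main obstacle is PENT($4,40$): the only four-group decomposition of $3r+5 = 125$ into admissible GDD sizes is $8^2\,44^1\,65^1$, and this fails because with only four groups every block is forced to contain one point from each group, so the $(8_1,8_2)$-pair count $64$ would have to equal the $(8_i,44)$-pair count $352$---an immediate contradiction. Resolving $r=40$ therefore requires either a $\geq 5$-group GDD decomposition of 125 (such as $8^5\,44^1\,77^1$ or $8^4\,32^2\,65^1$) whose existence can be extracted from the literature, or an explicit direct construction; a similar obstruction is to be expected for $r=52$ and $r=60$ unless the $8^u\,65^1$ thresholds turn out to be looser than the block-count estimate suggests.
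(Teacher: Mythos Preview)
Your overall architecture is exactly that of the paper: split $r\equiv 0\adfmod 4$ into $r\equiv 4\adfmod 8$ and $r\equiv 0\adfmod 8$, feed Theorem~\ref{g^um^1} with $\adfPENT(4,1)$ together with $\adfPENT(4,20)$ (resp.\ $\adfPENT(4,24)$), and use 4-GDDs of type $8^{3t}65^1$ for $t\ge 6$ (resp.\ $8^{3t}77^1$ for $t\ge 7$). Your thresholds and the resulting list of leftover values $r\in\{40,52,60\}$ are correct.

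Where your proposal wobbles is precisely on those three leftovers. The paper does not attempt any GDD decomposition for them at all: it simply supplies direct constructions of $\adfPENT(4,40)$, $\adfPENT(4,52)$ and $\adfPENT(4,60)$ in Appendix~C. Your speculative decompositions ``$8^5 44^1 77^1$'' and ``$8^4 32^2 65^1$'' of $125$ are in fact decompositions of $161$, not $125$, so they could at best address $r=52$ rather than $r=40$; moreover, no admissible partition of $125$ into sizes from $\{8,32,44,65,68,77\}$ with five or more parts exists (and the necessary condition $m\le g(u-1)$ kills types such as $8^6 77^1$), so Theorem~\ref{gddgeneral} genuinely cannot reach $r=40$ with the ingredients currently available. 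In short, your instinct that ``a direct construction\dots is needed'' is the correct resolution, and it is what the paper actually does for all three medium values.
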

\begin{proof}
For $r \equiv 4 \adfmod{8}$, in Theorem \ref{g^um^1} let $k=4$, $r=1$ and $s=20$. There exist pentagonal geometries PENT($4,1$) and PENT($4,20$) and a 4-GDD of type $8^{3t}65^1$, $t \geq 6$. Hence there exists a PENT($4,8t+20$), $t \geq 6$. 

For $r \equiv 0 \adfmod{8}$, in Theorem \ref{g^um^1} proceed as in the case above but let $s=24$. There exists a 4-GDD of type $8^{3t}77^1$, $t \geq 7$ and hence there exists a PENT($4,8t+24$), $t \geq 7$.

For PENT($4,r$), $r \in \{40,52,60\}$ see Appendix C; PENT($4,4$) does not exist by Theorem \ref{kk}. This accounts for all relevant values of $r$ except those listed as possible exceptions.
\end{proof}


\newpage

\section*{Appendix A}\label{app:PENT3}
{\noindent\boldmath $\adfPENT(3, 10)$}.\\
With point set $Z_{24}$, the 80 lines are
 
{\small
 $\{0, 3, 4\}$, $\{0, 5, 17\}$, $\{0, 6, 14\}$, $\{0, 7, 18\}$, $\{0, 8, 13\}$, $\{0, 9, 16\}$,\adfsplit
 $\{0, 10, 19\}$, $\{0, 11, 15\}$, $\{0, 12, 21\}$, $\{0, 20, 23\}$, $\{1, 2, 22\}$, $\{1, 3, 18\}$,\adfsplit
 $\{1, 4, 23\}$, $\{1, 5, 14\}$, $\{1, 6, 19\}$, $\{1, 7, 9\}$, $\{1, 8, 17\}$, $\{1, 10, 21\}$,\adfsplit
 $\{1, 12, 20\}$, $\{1, 13, 16\}$, $\{2, 5, 6\}$, $\{2, 7, 12\}$, $\{2, 8, 18\}$, $\{2, 9, 15\}$,\adfsplit
 $\{2, 10, 20\}$, $\{2, 11, 19\}$, $\{2, 13, 17\}$, $\{2, 14, 23\}$, $\{2, 16, 21\}$, $\{3, 5, 11\}$,\adfsplit
 $\{3, 6, 20\}$, $\{3, 7, 16\}$, $\{3, 8, 14\}$, $\{3, 9, 21\}$, $\{3, 10, 15\}$, $\{3, 12, 23\}$,\adfsplit
 $\{3, 19, 22\}$, $\{4, 7, 8\}$, $\{4, 9, 12\}$, $\{4, 10, 18\}$, $\{4, 11, 16\}$, $\{4, 13, 20\}$,\adfsplit
 $\{4, 14, 21\}$, $\{4, 15, 19\}$, $\{4, 17, 22\}$, $\{5, 7, 13\}$, $\{5, 8, 20\}$, $\{5, 9, 18\}$,\adfsplit
 $\{5, 10, 16\}$, $\{5, 12, 22\}$, $\{5, 21, 23\}$, $\{6, 9, 10\}$, $\{6, 11, 13\}$, $\{6, 12, 18\}$,\adfsplit
 $\{6, 15, 22\}$, $\{6, 16, 23\}$, $\{6, 17, 21\}$, $\{7, 10, 22\}$, $\{7, 11, 20\}$, $\{7, 14, 19\}$,\adfsplit
 $\{7, 15, 23\}$, $\{8, 11, 12\}$, $\{8, 15, 21\}$, $\{8, 16, 22\}$, $\{8, 19, 23\}$, $\{9, 11, 17\}$,\adfsplit
 $\{9, 13, 22\}$, $\{9, 14, 20\}$, $\{10, 13, 14\}$, $\{10, 17, 23\}$, $\{11, 14, 22\}$, $\{11, 18, 23\}$,\adfsplit
 $\{12, 15, 16\}$, $\{12, 17, 19\}$, $\{13, 15, 18\}$, $\{13, 19, 21\}$, $\{14, 17, 18\}$, $\{15, 17, 20\}$,\adfsplit
 $\{16, 19, 20\}$, $\{18, 21, 22\}$.
}

\noindent The deficiency graph is connected and has girth 6.~\\

\newpage
{\noindent\boldmath $\adfPENT(3, 16)$}.\\
With point set $Z_{36}$, the 192 lines are generated from

{\small
 $\{1, 15, 21\}$, $\{0, 14, 22\}$, $\{6, 25, 31\}$, $\{5, 24, 30\}$, $\{18, 19, 22\}$, $\{3, 8, 11\}$,\adfsplit
 $\{2, 15, 28\}$, $\{14, 21, 28\}$, $\{5, 23, 34\}$, $\{23, 24, 31\}$, $\{20, 25, 28\}$, $\{5, 32, 33\}$,\adfsplit
 $\{19, 25, 16\}$, $\{3, 10, 18\}$, $\{12, 4, 31\}$, $\{10, 12, 32\}$, $\{34, 33, 29\}$, $\{10, 16, 22\}$,\adfsplit
 $\{17, 5, 14\}$, $\{22, 5, 7\}$, $\{13, 25, 14\}$, $\{29, 3, 6\}$, $\{7, 25, 17\}$, $\{30, 20, 9\}$,\adfsplit
 $\{0, 10, 13\}$, $\{0, 5, 25\}$, $\{1, 5, 6\}$, $\{1, 9, 11\}$, $\{3, 17, 31\}$, $\{0, 8, 29\}$,\adfsplit
 $\{2, 17, 33\}$, $\{4, 5, 28\}$, $\{4, 23, 29\}$, $\{1, 16, 20\}$, $\{1, 3, 27\}$, $\{11, 13, 20\}$,\adfsplit
 $\{1, 10, 23\}$, $\{1, 30, 33\}$, $\{2, 21, 34\}$, $\{3, 7, 34\}$, $\{6, 20, 22\}$, $\{3, 22, 33\}$,\adfsplit
 $\{2, 22, 23\}$, $\{0, 16, 32\}$, $\{7, 20, 33\}$, $\{3, 23, 28\}$, $\{4, 6, 11\}$, $\{0, 31, 33\}$,\adfsplit
 $\{6, 23, 33\}$, $\{4, 9, 21\}$, $\{0, 9, 27\}$, $\{11, 14, 15\}$, $\{0, 3, 4\}$, $\{2, 4, 30\}$,\adfsplit
 $\{0, 18, 30\}$, $\{0, 2, 12\}$, $\{0, 11, 23\}$, $\{6, 8, 19\}$, $\{2, 11, 18\}$, $\{7, 11, 31\}$,\adfsplit
 $\{2, 20, 27\}$, $\{3, 19, 20\}$, $\{2, 8, 32\}$, $\{2, 7, 26\}$
}

under the action of the mapping $x \mapsto x + 12 \adfmod{36}$.\\
The deficiency graph is connected and has girth 5.~\\

{\noindent\boldmath $\adfPENT(3, 19)$}.\\
With point set $Z_{42}$, the 266 lines are generated from

{\small
 $\{18, 24, 34\}$, $\{13, 31, 35\}$, $\{11, 16, 23\}$, $\{15, 22, 33\}$, $\{12, 27, 32\}$, $\{13, 26, 38\}$,\adfsplit
 $\{30, 33, 34\}$, $\{15, 14, 11\}$, $\{40, 37, 16\}$, $\{3, 20, 28\}$, $\{1, 10, 15\}$, $\{38, 28, 34\}$,\adfsplit
 $\{16, 19, 5\}$, $\{26, 24, 29\}$, $\{37, 21, 35\}$, $\{13, 9, 19\}$, $\{20, 27, 6\}$, $\{20, 22, 38\}$,\adfsplit
 $\{0, 8, 28\}$, $\{1, 16, 28\}$, $\{0, 22, 23\}$, $\{1, 17, 34\}$, $\{0, 11, 40\}$, $\{3, 16, 35\}$,\adfsplit
 $\{0, 17, 32\}$, $\{0, 1, 30\}$, $\{0, 26, 37\}$, $\{0, 7, 9\}$, $\{0, 25, 35\}$, $\{1, 9, 20\}$,\adfsplit
 $\{0, 31, 38\}$, $\{1, 2, 38\}$, $\{0, 27, 29\}$, $\{0, 19, 39\}$, $\{3, 11, 12\}$, $\{3, 23, 29\}$,\adfsplit
 $\{2, 17, 35\}$, $\{2, 15, 21\}$
}

under the action of the mapping $x \mapsto x + 6 \adfmod{42}$.\\
The deficiency graph is connected and has girth 5.~\\

{\noindent\boldmath $\adfPENT(3, 22)$}.\\
With point set $Z_{48}$, the 352 lines are

{\small
 $\{0, 3, 4\}$, $\{0, 5, 45\}$, $\{0, 6, 15\}$, $\{0, 7, 19\}$, $\{0, 8, 27\}$, $\{0, 9, 18\}$,\adfsplit
 $\{0, 10, 32\}$, $\{0, 11, 22\}$, $\{0, 12, 33\}$, $\{0, 13, 38\}$, $\{0, 14, 21\}$, $\{0, 16, 34\}$,\adfsplit
 $\{0, 17, 37\}$, $\{0, 20, 31\}$, $\{0, 23, 42\}$, $\{0, 24, 40\}$, $\{0, 25, 36\}$, $\{0, 26, 39\}$,\adfsplit
 $\{0, 28, 43\}$, $\{0, 29, 35\}$, $\{0, 30, 41\}$, $\{0, 44, 47\}$, $\{1, 2, 46\}$, $\{1, 3, 27\}$,\adfsplit
 $\{1, 4, 9\}$, $\{1, 6, 21\}$, $\{1, 7, 32\}$, $\{1, 8, 18\}$, $\{1, 10, 42\}$, $\{1, 11, 38\}$,\adfsplit
 $\{1, 12, 34\}$, $\{1, 13, 24\}$, $\{1, 14, 35\}$, $\{1, 15, 36\}$, $\{1, 16, 25\}$, $\{1, 17, 28\}$,\adfsplit
 $\{1, 19, 26\}$, $\{1, 20, 30\}$, $\{1, 22, 43\}$, $\{1, 23, 37\}$, $\{1, 29, 39\}$, $\{1, 31, 47\}$,\adfsplit
 $\{1, 33, 40\}$, $\{1, 41, 44\}$, $\{2, 5, 6\}$, $\{2, 7, 47\}$, $\{2, 8, 42\}$, $\{2, 9, 45\}$,\adfsplit
 $\{2, 10, 39\}$, $\{2, 11, 23\}$, $\{2, 12, 37\}$, $\{2, 13, 27\}$, $\{2, 14, 43\}$, $\{2, 15, 24\}$,\adfsplit
 $\{2, 16, 30\}$, $\{2, 17, 35\}$, $\{2, 18, 36\}$, $\{2, 19, 41\}$, $\{2, 20, 29\}$, $\{2, 21, 34\}$,\adfsplit
 $\{2, 22, 31\}$, $\{2, 25, 32\}$, $\{2, 26, 33\}$, $\{2, 28, 40\}$, $\{2, 38, 44\}$, $\{3, 5, 35\}$,\adfsplit
 $\{3, 6, 11\}$, $\{3, 8, 30\}$, $\{3, 9, 23\}$, $\{3, 10, 20\}$, $\{3, 12, 19\}$, $\{3, 13, 25\}$,\adfsplit
 $\{3, 14, 24\}$, $\{3, 15, 29\}$, $\{3, 16, 45\}$, $\{3, 17, 36\}$, $\{3, 18, 32\}$, $\{3, 21, 42\}$,\adfsplit
 $\{3, 22, 33\}$, $\{3, 26, 38\}$, $\{3, 28, 37\}$, $\{3, 31, 44\}$, $\{3, 34, 40\}$, $\{3, 39, 41\}$,\adfsplit
 $\{3, 43, 46\}$, $\{4, 7, 8\}$, $\{4, 10, 25\}$, $\{4, 11, 47\}$, $\{4, 12, 39\}$, $\{4, 13, 30\}$,\adfsplit
 $\{4, 14, 46\}$, $\{4, 15, 38\}$, $\{4, 16, 40\}$, $\{4, 17, 26\}$, $\{4, 18, 34\}$, $\{4, 19, 33\}$,\adfsplit
 $\{4, 20, 45\}$, $\{4, 21, 28\}$, $\{4, 22, 29\}$, $\{4, 23, 43\}$, $\{4, 24, 36\}$, $\{4, 27, 44\}$,\adfsplit
 $\{4, 31, 37\}$, $\{4, 32, 42\}$, $\{4, 35, 41\}$, $\{5, 7, 38\}$, $\{5, 8, 13\}$, $\{5, 10, 24\}$,\adfsplit
 $\{5, 11, 17\}$, $\{5, 12, 42\}$, $\{5, 14, 32\}$, $\{5, 15, 47\}$, $\{5, 16, 37\}$, $\{5, 18, 28\}$,\adfsplit
 $\{5, 19, 29\}$, $\{5, 20, 39\}$, $\{5, 21, 41\}$, $\{5, 22, 44\}$, $\{5, 23, 34\}$, $\{5, 25, 27\}$,\adfsplit
 $\{5, 26, 40\}$, $\{5, 30, 43\}$, $\{5, 31, 33\}$, $\{5, 36, 46\}$, $\{6, 9, 10\}$, $\{6, 12, 24\}$,\adfsplit
 $\{6, 13, 28\}$, $\{6, 14, 27\}$, $\{6, 16, 32\}$, $\{6, 17, 41\}$, $\{6, 18, 38\}$, $\{6, 19, 34\}$,\adfsplit
 $\{6, 20, 26\}$, $\{6, 22, 40\}$, $\{6, 23, 47\}$, $\{6, 25, 45\}$, $\{6, 29, 36\}$, $\{6, 30, 44\}$,\adfsplit
 $\{6, 31, 43\}$, $\{6, 33, 42\}$, $\{6, 35, 37\}$, $\{6, 39, 46\}$, $\{7, 9, 26\}$, $\{7, 10, 15\}$,\adfsplit
 $\{7, 12, 28\}$, $\{7, 13, 45\}$, $\{7, 14, 39\}$, $\{7, 16, 22\}$, $\{7, 17, 42\}$, $\{7, 18, 43\}$,\adfsplit
 $\{7, 20, 35\}$, $\{7, 21, 40\}$, $\{7, 23, 30\}$, $\{7, 24, 37\}$, $\{7, 25, 44\}$, $\{7, 27, 36\}$,\adfsplit
 $\{7, 29, 34\}$, $\{7, 31, 41\}$, $\{7, 33, 46\}$, $\{8, 11, 12\}$, $\{8, 14, 40\}$, $\{8, 15, 37\}$,\adfsplit
 $\{8, 16, 33\}$, $\{8, 17, 44\}$, $\{8, 19, 31\}$, $\{8, 20, 38\}$, $\{8, 21, 32\}$, $\{8, 22, 47\}$,\adfsplit
 $\{8, 23, 36\}$, $\{8, 24, 41\}$, $\{8, 25, 34\}$, $\{8, 26, 35\}$, $\{8, 28, 46\}$, $\{8, 29, 43\}$,\adfsplit
 $\{8, 39, 45\}$, $\{9, 11, 24\}$, $\{9, 12, 17\}$, $\{9, 14, 30\}$, $\{9, 15, 32\}$, $\{9, 16, 46\}$,\adfsplit
 $\{9, 19, 39\}$, $\{9, 20, 28\}$, $\{9, 21, 31\}$, $\{9, 22, 38\}$, $\{9, 25, 40\}$, $\{9, 27, 43\}$,\adfsplit
 $\{9, 29, 42\}$, $\{9, 33, 35\}$, $\{9, 34, 41\}$, $\{9, 36, 47\}$, $\{9, 37, 44\}$, $\{10, 13, 14\}$,\adfsplit
 $\{10, 16, 35\}$, $\{10, 17, 22\}$, $\{10, 18, 40\}$, $\{10, 19, 43\}$, $\{10, 21, 47\}$, $\{10, 23, 28\}$,\adfsplit
 $\{10, 26, 36\}$, $\{10, 27, 41\}$, $\{10, 29, 44\}$, $\{10, 30, 37\}$, $\{10, 31, 46\}$, $\{10, 33, 38\}$,\adfsplit
 $\{10, 34, 45\}$, $\{11, 13, 46\}$, $\{11, 14, 19\}$, $\{11, 16, 44\}$, $\{11, 18, 30\}$, $\{11, 20, 33\}$,\adfsplit
 $\{11, 21, 37\}$, $\{11, 25, 35\}$, $\{11, 26, 34\}$, $\{11, 27, 42\}$, $\{11, 28, 41\}$, $\{11, 29, 45\}$,\adfsplit
 $\{11, 31, 40\}$, $\{11, 32, 39\}$, $\{11, 36, 43\}$, $\{12, 15, 16\}$, $\{12, 18, 26\}$, $\{12, 20, 44\}$,\adfsplit
 $\{12, 21, 27\}$, $\{12, 22, 45\}$, $\{12, 23, 38\}$, $\{12, 25, 43\}$, $\{12, 29, 41\}$, $\{12, 30, 46\}$,\adfsplit
 $\{12, 31, 36\}$, $\{12, 32, 40\}$, $\{12, 35, 47\}$, $\{13, 15, 26\}$, $\{13, 16, 21\}$, $\{13, 18, 35\}$,\adfsplit
 $\{13, 19, 40\}$, $\{13, 20, 32\}$, $\{13, 22, 37\}$, $\{13, 23, 29\}$, $\{13, 31, 42\}$, $\{13, 33, 39\}$,\adfsplit
 $\{13, 34, 43\}$, $\{13, 36, 44\}$, $\{13, 41, 47\}$, $\{14, 17, 18\}$, $\{14, 20, 36\}$, $\{14, 22, 34\}$,\adfsplit
 $\{14, 23, 41\}$, $\{14, 25, 31\}$, $\{14, 26, 44\}$, $\{14, 28, 38\}$, $\{14, 29, 47\}$, $\{14, 33, 45\}$,\adfsplit
 $\{14, 37, 42\}$, $\{15, 17, 31\}$, $\{15, 18, 23\}$, $\{15, 20, 42\}$, $\{15, 21, 43\}$, $\{15, 22, 41\}$,\adfsplit
 $\{15, 25, 39\}$, $\{15, 27, 40\}$, $\{15, 28, 35\}$, $\{15, 30, 45\}$, $\{15, 33, 44\}$, $\{15, 34, 46\}$,\adfsplit
 $\{16, 19, 20\}$, $\{16, 23, 39\}$, $\{16, 24, 47\}$, $\{16, 26, 42\}$, $\{16, 27, 29\}$, $\{16, 28, 36\}$,\adfsplit
 $\{16, 31, 38\}$, $\{16, 41, 43\}$, $\{17, 19, 46\}$, $\{17, 20, 25\}$, $\{17, 23, 45\}$, $\{17, 24, 39\}$,\adfsplit
 $\{17, 27, 33\}$, $\{17, 29, 38\}$, $\{17, 30, 40\}$, $\{17, 32, 43\}$, $\{17, 34, 47\}$, $\{18, 21, 22\}$,\adfsplit
 $\{18, 24, 42\}$, $\{18, 25, 41\}$, $\{18, 27, 45\}$, $\{18, 29, 31\}$, $\{18, 33, 47\}$, $\{18, 37, 46\}$,\adfsplit
 $\{18, 39, 44\}$, $\{19, 21, 38\}$, $\{19, 22, 27\}$, $\{19, 24, 32\}$, $\{19, 25, 37\}$, $\{19, 28, 44\}$,\adfsplit
 $\{19, 30, 47\}$, $\{19, 35, 45\}$, $\{19, 36, 42\}$, $\{20, 23, 24\}$, $\{20, 27, 34\}$, $\{20, 37, 43\}$,\adfsplit
 $\{20, 40, 47\}$, $\{20, 41, 46\}$, $\{21, 23, 33\}$, $\{21, 24, 29\}$, $\{21, 26, 46\}$, $\{21, 30, 39\}$,\adfsplit
 $\{21, 35, 44\}$, $\{21, 36, 45\}$, $\{22, 25, 26\}$, $\{22, 28, 39\}$, $\{22, 30, 36\}$, $\{22, 32, 46\}$,\adfsplit
 $\{22, 35, 42\}$, $\{23, 25, 46\}$, $\{23, 26, 31\}$, $\{23, 32, 44\}$, $\{23, 35, 40\}$, $\{24, 27, 28\}$,\adfsplit
 $\{24, 30, 38\}$, $\{24, 31, 45\}$, $\{24, 33, 43\}$, $\{24, 34, 44\}$, $\{24, 35, 46\}$, $\{25, 28, 33\}$,\adfsplit
 $\{25, 30, 42\}$, $\{25, 38, 47\}$, $\{26, 29, 30\}$, $\{26, 32, 41\}$, $\{26, 37, 47\}$, $\{26, 43, 45\}$,\adfsplit
 $\{27, 30, 35\}$, $\{27, 32, 47\}$, $\{27, 37, 39\}$, $\{27, 38, 46\}$, $\{28, 31, 32\}$, $\{28, 34, 42\}$,\adfsplit
 $\{28, 45, 47\}$, $\{29, 32, 37\}$, $\{29, 40, 46\}$, $\{30, 33, 34\}$, $\{31, 34, 39\}$, $\{32, 35, 36\}$,\adfsplit
 $\{32, 38, 45\}$, $\{33, 36, 41\}$, $\{34, 37, 38\}$, $\{35, 38, 43\}$, $\{36, 39, 40\}$, $\{37, 40, 45\}$,\adfsplit
 $\{38, 41, 42\}$, $\{39, 42, 47\}$, $\{40, 43, 44\}$, $\{42, 45, 46\}$.
}

\noindent The deficiency graph is connected and has girth 5.~\\

\newpage
{\noindent\boldmath $\adfPENT(3, 24)$}.\\
With point set $Z_{52}$, the 416 lines are generated from

{\small
 $\{17, 21, 43\}$, $\{32, 35, 36\}$, $\{18, 38, 43\}$, $\{12, 14, 21\}$, $\{51, 27, 20\}$, $\{25, 11, 49\}$,\adfsplit
 $\{4, 23, 36\}$, $\{4, 10, 39\}$, $\{35, 37, 3\}$, $\{31, 30, 36\}$, $\{36, 47, 7\}$, $\{6, 10, 33\}$,\adfsplit
 $\{1, 3, 11\}$, $\{1, 43, 47\}$, $\{0, 14, 27\}$, $\{2, 11, 47\}$, $\{0, 15, 34\}$, $\{1, 7, 22\}$,\adfsplit
 $\{1, 14, 31\}$, $\{2, 23, 26\}$, $\{0, 10, 22\}$, $\{0, 18, 26\}$, $\{0, 5, 38\}$, $\{0, 33, 42\}$,\adfsplit
 $\{0, 1, 50\}$, $\{0, 30, 41\}$, $\{1, 6, 41\}$, $\{1, 2, 17\}$, $\{0, 37, 45\}$, $\{0, 29, 49\}$,\adfsplit
 $\{0, 12, 25\}$, $\{0, 8, 24\}$
}

under the action of the mapping $x \mapsto x + 4 \adfmod{52}$.\\
The deficiency graph is connected and has girth 7.~\\

{\noindent\boldmath $\adfPENT(3, 25)$}.\\
With point set $Z_{54}$, the 450 lines are generated from

{\small
 $\{16, 21, 43\}$, $\{12, 21, 46\}$, $\{33, 35, 47\}$, $\{26, 36, 37\}$, $\{13, 35, 42\}$, $\{14, 26, 28\}$,\adfsplit
 $\{48, 26, 16\}$, $\{24, 18, 0\}$, $\{22, 34, 3\}$, $\{1, 51, 2\}$, $\{11, 49, 14\}$, $\{34, 48, 35\}$,\adfsplit
 $\{38, 11, 22\}$, $\{14, 6, 39\}$, $\{36, 38, 51\}$, $\{16, 8, 13\}$, $\{37, 16, 23\}$, $\{3, 41, 4\}$,\adfsplit
 $\{19, 9, 25\}$, $\{49, 7, 20\}$, $\{42, 41, 33\}$, $\{1, 27, 15\}$, $\{16, 35, 20\}$, $\{33, 40, 46\}$,\adfsplit
 $\{8, 42, 15\}$, $\{48, 31, 40\}$, $\{29, 25, 8\}$, $\{2, 50, 28\}$, $\{13, 0, 28\}$, $\{2, 5, 20\}$,\adfsplit
 $\{1, 9, 52\}$, $\{4, 29, 34\}$, $\{1, 22, 40\}$, $\{0, 10, 12\}$, $\{2, 22, 39\}$, $\{0, 4, 17\}$,\adfsplit
 $\{0, 38, 39\}$, $\{3, 9, 27\}$, $\{2, 11, 21\}$, $\{0, 3, 14\}$, $\{0, 26, 50\}$, $\{1, 8, 31\}$,\adfsplit
 $\{0, 11, 19\}$, $\{0, 5, 7\}$, $\{0, 31, 49\}$, $\{0, 29, 35\}$, $\{0, 23, 51\}$, $\{1, 3, 35\}$,\adfsplit
 $\{5, 9, 29\}$, $\{1, 11, 29\}$
}

under the action of the mapping $x \mapsto x + 6 \adfmod{54}$.\\
The deficiency graph is connected and has girth 5.~\\

{\noindent\boldmath $\adfPENT(3, 27)$}.\\
With point set $Z_{58}$, the 522 lines are generated from

{\small
 $\{6, 45, 52\}$, $\{3, 14, 57\}$, $\{21, 27, 37\}$, $\{13, 33, 6\}$, $\{7, 36, 56\}$, $\{16, 21, 18\}$,\adfsplit
 $\{6, 24, 34\}$, $\{13, 25, 46\}$, $\{0, 1, 31\}$, $\{0, 4, 26\}$, $\{0, 11, 14\}$, $\{0, 8, 21\}$,\adfsplit
 $\{0, 17, 53\}$, $\{0, 16, 35\}$, $\{1, 15, 33\}$, $\{0, 33, 41\}$, $\{0, 15, 24\}$, $\{1, 2, 25\}$
}

under the action of the mapping $x \mapsto x + 2 \adfmod{58}$.\\
The deficiency graph is connected and has girth 6.~\\

\section*{Appendix B}\label{app:SMALLSYS}
{\noindent\boldmath $\adfPENT(3, 10)$}.\\
With point set $Z_{24}$, the 80 lines are
 
{\small
 $\{0, 4, 5\}$, $\{0, 6, 7\}$, $\{0, 8, 9\}$, $\{0, 10, 14\}$, $\{0, 11, 22\}$, $\{0, 12, 20\}$,\adfsplit
 $\{0, 13, 23\}$, $\{0, 15, 19\}$, $\{0, 16, 21\}$, $\{0, 17, 18\}$, $\{1, 2, 3\}$, $\{1, 6, 21\}$,\adfsplit
 $\{1, 7, 15\}$, $\{1, 8, 20\}$, $\{1, 9, 22\}$, $\{1, 10, 11\}$, $\{1, 12, 13\}$, $\{1, 14, 23\}$,\adfsplit
 $\{1, 16, 18\}$, $\{1, 17, 19\}$, $\{2, 4, 19\}$, $\{2, 5, 18\}$, $\{2, 8, 21\}$, $\{2, 9, 23\}$,\adfsplit
 $\{2, 10, 12\}$, $\{2, 11, 14\}$, $\{2, 13, 17\}$, $\{2, 15, 20\}$, $\{2, 16, 22\}$, $\{3, 4, 23\}$,\adfsplit
 $\{3, 5, 20\}$, $\{3, 6, 16\}$, $\{3, 7, 21\}$, $\{3, 10, 15\}$, $\{3, 11, 18\}$, $\{3, 12, 19\}$,\adfsplit
 $\{3, 13, 14\}$, $\{3, 17, 22\}$, $\{4, 6, 8\}$, $\{4, 7, 16\}$, $\{4, 9, 12\}$, $\{4, 13, 20\}$,\adfsplit
 $\{4, 14, 22\}$, $\{4, 15, 18\}$, $\{4, 17, 21\}$, $\{5, 6, 17\}$, $\{5, 7, 19\}$, $\{5, 8, 11\}$,\adfsplit
 $\{5, 9, 16\}$, $\{5, 10, 22\}$, $\{5, 14, 21\}$, $\{5, 15, 23\}$, $\{6, 9, 20\}$, $\{6, 11, 23\}$,\adfsplit
 $\{6, 13, 19\}$, $\{6, 14, 18\}$, $\{6, 15, 22\}$, $\{7, 8, 23\}$, $\{7, 9, 17\}$, $\{7, 10, 20\}$,\adfsplit
 $\{7, 12, 22\}$, $\{7, 13, 18\}$, $\{8, 12, 18\}$, $\{8, 13, 22\}$, $\{8, 14, 19\}$, $\{8, 16, 17\}$,\adfsplit
 $\{9, 10, 18\}$, $\{9, 11, 19\}$, $\{9, 15, 21\}$, $\{10, 13, 21\}$, $\{10, 16, 19\}$, $\{10, 17, 23\}$,\adfsplit
 $\{11, 12, 21\}$, $\{11, 13, 15\}$, $\{11, 17, 20\}$, $\{12, 14, 15\}$, $\{12, 16, 23\}$, $\{14, 16, 20\}$,\adfsplit
 $\{18, 19, 20\}$, $\{21, 22, 23\}$.
}

\noindent The opposite line pair is $\{18,19,20\}$, $\{21,22,23\}$.~\\

{\noindent\boldmath $\adfPENT(3, 12)$}.\\
With point set $Z_{28}$, the 112 lines are
 
{\small
 $\{0, 4, 5\}$, $\{0, 6, 7\}$, $\{0, 8, 9\}$, $\{0, 10, 27\}$, $\{0, 11, 15\}$, $\{0, 12, 20\}$,\adfsplit
 $\{0, 13, 24\}$, $\{0, 14, 26\}$, $\{0, 16, 25\}$, $\{0, 17, 21\}$, $\{0, 18, 22\}$, $\{0, 19, 23\}$,\adfsplit
 $\{1, 2, 3\}$, $\{1, 6, 15\}$, $\{1, 7, 20\}$, $\{1, 8, 16\}$, $\{1, 9, 23\}$, $\{1, 10, 11\}$,\adfsplit
 $\{1, 12, 13\}$, $\{1, 14, 22\}$, $\{1, 17, 24\}$, $\{1, 18, 25\}$, $\{1, 19, 27\}$, $\{1, 21, 26\}$,\adfsplit
 $\{2, 4, 8\}$, $\{2, 5, 14\}$, $\{2, 9, 25\}$, $\{2, 10, 12\}$, $\{2, 11, 13\}$, $\{2, 15, 22\}$,\adfsplit
 $\{2, 16, 24\}$, $\{2, 17, 26\}$, $\{2, 18, 19\}$, $\{2, 20, 23\}$, $\{2, 21, 27\}$, $\{3, 4, 18\}$,\adfsplit
 $\{3, 5, 24\}$, $\{3, 6, 23\}$, $\{3, 7, 27\}$, $\{3, 10, 13\}$, $\{3, 11, 19\}$, $\{3, 12, 26\}$,\adfsplit
 $\{3, 14, 15\}$, $\{3, 16, 17\}$, $\{3, 20, 22\}$, $\{3, 21, 25\}$, $\{4, 6, 14\}$, $\{4, 7, 16\}$,\adfsplit
 $\{4, 9, 13\}$, $\{4, 12, 25\}$, $\{4, 15, 23\}$, $\{4, 17, 27\}$, $\{4, 19, 26\}$, $\{4, 20, 24\}$,\adfsplit
 $\{4, 21, 22\}$, $\{5, 6, 18\}$, $\{5, 7, 19\}$, $\{5, 8, 25\}$, $\{5, 9, 26\}$, $\{5, 10, 17\}$,\adfsplit
 $\{5, 11, 22\}$, $\{5, 15, 27\}$, $\{5, 16, 20\}$, $\{5, 21, 23\}$, $\{6, 8, 27\}$, $\{6, 9, 19\}$,\adfsplit
 $\{6, 11, 26\}$, $\{6, 13, 17\}$, $\{6, 16, 22\}$, $\{6, 20, 25\}$, $\{6, 21, 24\}$, $\{7, 8, 26\}$,\adfsplit
 $\{7, 9, 15\}$, $\{7, 10, 21\}$, $\{7, 12, 23\}$, $\{7, 14, 25\}$, $\{7, 17, 22\}$, $\{7, 18, 24\}$,\adfsplit
 $\{8, 10, 20\}$, $\{8, 11, 12\}$, $\{8, 13, 22\}$, $\{8, 17, 23\}$, $\{8, 18, 21\}$, $\{8, 19, 24\}$,\adfsplit
 $\{9, 10, 22\}$, $\{9, 11, 21\}$, $\{9, 12, 24\}$, $\{9, 14, 27\}$, $\{9, 18, 20\}$, $\{10, 15, 24\}$,\adfsplit
 $\{10, 16, 23\}$, $\{10, 18, 26\}$, $\{10, 19, 25\}$, $\{11, 14, 24\}$, $\{11, 17, 25\}$, $\{11, 18, 23\}$,\adfsplit
 $\{11, 20, 27\}$, $\{12, 14, 21\}$, $\{12, 15, 17\}$, $\{12, 16, 27\}$, $\{12, 19, 22\}$, $\{13, 14, 23\}$,\adfsplit
 $\{13, 15, 25\}$, $\{13, 16, 26\}$, $\{13, 18, 27\}$, $\{13, 20, 21\}$, $\{14, 16, 18\}$, $\{14, 17, 19\}$,\adfsplit
 $\{15, 16, 19\}$, $\{15, 20, 26\}$, $\{22, 23, 24\}$, $\{25, 26, 27\}$. 
}

\noindent The opposite line pair is $\{22,23,24\}$, $\{25,26,27\}$.~\\

\section*{Appendix C}\label{app:PENT4}

{\noindent\boldmath $\adfPENT(4, 21)$}.\\
With point set $Z_{68}$, the 357 lines are generated from

{\small
 $\{9, 29, 32, 36\}$, $\{3, 40, 58, 60\}$, $\{13, 19, 39, 47\}$, $\{1, 26, 34, 54\}$,\adfsplit
 $\{67, 23, 58, 8\}$, $\{58, 2, 55, 48\}$, $\{41, 8, 57, 36\}$, $\{15, 28, 11, 65\}$,\adfsplit
 $\{29, 6, 10, 5\}$, $\{0, 2, 12, 46\}$, $\{0, 26, 53, 57\}$, $\{1, 55, 66, 67\}$,\adfsplit
 $\{0, 25, 38, 54\}$, $\{0, 1, 43, 62\}$, $\{0, 17, 27, 63\}$, $\{0, 8, 24, 47\}$,\adfsplit
 $\{0, 6, 42, 67\}$, $\{0, 14, 19, 35\}$, $\{0, 3, 13, 30\}$, $\{1, 10, 39, 57\}$,\adfsplit
 $\{1, 9, 31, 37\}$
}

under the action of the mapping $x \mapsto x + 4 \adfmod{68}$.\\
The deficiency graph is connected and has girth 6.~\\

{\noindent\boldmath $\adfPENT(4, 29)$}.\\
With point set $Z_{92}$, the 667 lines are generated from

{\small
 $\{14, 16, 69, 76\}$, $\{24, 33, 61, 63\}$, $\{23, 46, 50, 80\}$, $\{33, 39, 59, 74\}$,\adfsplit
 $\{33, 69, 49, 26\}$, $\{35, 59, 77, 67\}$, $\{58, 20, 67, 42\}$, $\{0, 27, 71, 49\}$,\adfsplit
 $\{45, 57, 19, 30\}$, $\{75, 84, 47, 32\}$, $\{24, 47, 18, 10\}$, $\{66, 71, 54, 53\}$,\adfsplit
 $\{87, 26, 54, 20\}$, $\{72, 91, 60, 1\}$, $\{0, 63, 66, 79\}$, $\{0, 29, 75, 87\}$,\adfsplit
 $\{0, 13, 51, 91\}$, $\{2, 3, 22, 54\}$, $\{0, 3, 7, 50\}$, $\{1, 9, 43, 82\}$,\adfsplit
 $\{0, 11, 17, 44\}$, $\{1, 5, 15, 22\}$, $\{0, 26, 57, 82\}$, $\{0, 1, 25, 54\}$,\adfsplit
 $\{0, 42, 45, 89\}$, $\{0, 41, 46, 81\}$, $\{0, 4, 72, 77\}$, $\{0, 2, 61, 70\}$,\adfsplit
 $\{0, 8, 18, 36\}$
}

under the action of the mapping $x \mapsto x + 4 \adfmod{92}$.\\
The deficiency graph is connected and has girth 5.~\\

{

{\noindent\boldmath $\adfPENT(4, 37)$}.\\
With point set $Z_{116}$, the 1073 lines are generated from

{\small
 $\{3, 14, 63, 79\}$, $\{15, 33, 54, 85\}$, $\{10, 65, 104, 110\}$, $\{0, 40, 56, 105\}$,\adfsplit
 $\{55, 47, 38, 26\}$, $\{79, 85, 107, 26\}$, $\{87, 28, 21, 69\}$, $\{67, 100, 70, 22\}$,\adfsplit
 $\{82, 105, 20, 115\}$, $\{82, 95, 24, 23\}$, $\{85, 105, 47, 90\}$, $\{4, 52, 94, 97\}$,\adfsplit
 $\{32, 62, 26, 30\}$, $\{59, 108, 44, 73\}$, $\{0, 99, 101, 104\}$, $\{85, 46, 31, 19\}$,\adfsplit
 $\{109, 52, 102, 84\}$, $\{17, 66, 6, 93\}$, $\{86, 79, 10, 11\}$, $\{47, 57, 41, 66\}$,\adfsplit
 $\{0, 2, 21, 94\}$, $\{1, 18, 38, 82\}$, $\{0, 8, 34, 80\}$, $\{0, 13, 54, 82\}$,\adfsplit
 $\{0, 28, 61, 74\}$, $\{0, 9, 10, 35\}$, $\{0, 1, 43, 66\}$, $\{0, 23, 47, 78\}$,\adfsplit
 $\{0, 4, 73, 106\}$, $\{0, 19, 98, 103\}$, $\{0, 37, 39, 91\}$, $\{1, 5, 13, 93\}$,\adfsplit
 $\{0, 5, 24, 75\}$, $\{0, 17, 55, 89\}$, $\{0, 11, 31, 53\}$, $\{1, 31, 35, 61\}$,\adfsplit
 $\{0, 7, 87, 96\}$
}

under the action of the mapping $x \mapsto x + 4 \adfmod{116}$.\\
The deficiency graph is connected and has girth 6.~\\

{\noindent\boldmath $\adfPENT(4, 40)$}.\\
With point set $Z_{125}$, the 1250 lines are generated from

{\small
 $\{49, 97, 101, 119\}$, $\{25, 49, 56, 71\}$, $\{12, 30, 113, 117\}$, $\{17, 48, 54, 83\}$,\adfsplit
 $\{10, 78, 80, 81\}$, $\{70, 11, 47, 98\}$, $\{78, 75, 2, 71\}$, $\{3, 16, 21, 120\}$,\adfsplit
 $\{118, 74, 82, 114\}$, $\{40, 69, 13, 90\}$, $\{78, 19, 108, 61\}$, $\{25, 115, 78, 66\}$,\adfsplit
 $\{33, 55, 22, 10\}$, $\{83, 1, 28, 62\}$, $\{119, 6, 28, 43\}$, $\{114, 113, 52, 107\}$,\adfsplit
 $\{123, 7, 8, 114\}$, $\{120, 25, 15, 6\}$, $\{98, 84, 25, 12\}$, $\{104, 79, 51, 94\}$,\adfsplit
 $\{112, 95, 101, 99\}$, $\{25, 50, 88, 72\}$, $\{61, 95, 111, 27\}$, $\{77, 124, 79, 0\}$,\adfsplit
 $\{13, 32, 95, 88\}$, $\{51, 115, 74, 54\}$, $\{4, 72, 30, 35\}$, $\{46, 104, 7, 22\}$,\adfsplit
 $\{0, 14, 74, 85\}$, $\{0, 13, 44, 60\}$, $\{0, 15, 51, 108\}$, $\{0, 33, 56, 58\}$,\adfsplit
 $\{0, 19, 111, 117\}$, $\{0, 18, 72, 89\}$, $\{0, 34, 39, 69\}$, $\{0, 32, 81, 113\}$,\adfsplit
 $\{0, 2, 7, 82\}$, $\{0, 27, 67, 86\}$, $\{0, 9, 96, 122\}$, $\{1, 14, 53, 64\}$,\adfsplit
 $\{3, 23, 64, 88\}$, $\{1, 22, 63, 109\}$, $\{1, 79, 82, 112\}$, $\{1, 2, 31, 98\}$,\adfsplit
 $\{1, 9, 36, 46\}$, $\{1, 26, 72, 118\}$, $\{1, 73, 78, 94\}$, $\{2, 27, 69, 92\}$,\adfsplit
 $\{1, 21, 52, 61\}$, $\{1, 17, 43, 69\}$
}

under the action of the mapping $x \mapsto x + 5 \adfmod{125}$.\\
The deficiency graph is connected and has girth 5.~\\

{\noindent\boldmath $\adfPENT(4, 52)$}.\\
With point set $Z_{161}$, the 2093 lines are generated from

{\small
 $\{14, 76, 131, 147\}$, $\{16, 32, 44, 110\}$, $\{9, 120, 148, 156\}$, $\{83, 124, 132, 158\}$,\adfsplit
 $\{10, 34, 134, 138\}$, $\{32, 35, 45, 57\}$, $\{38, 87, 91, 137\}$, $\{80, 159, 52, 91\}$,\adfsplit
 $\{134, 158, 41, 54\}$, $\{109, 64, 57, 11\}$, $\{160, 151, 56, 29\}$, $\{89, 27, 117, 70\}$,\adfsplit
 $\{137, 115, 138, 122\}$, $\{133, 145, 27, 29\}$, $\{139, 75, 46, 3\}$, $\{157, 159, 47, 30\}$,\adfsplit
 $\{79, 149, 83, 44\}$, $\{107, 122, 48, 158\}$, $\{150, 75, 130, 6\}$, $\{29, 32, 136, 139\}$,\adfsplit
 $\{145, 80, 158, 19\}$, $\{28, 55, 27, 13\}$, $\{121, 118, 124, 130\}$, $\{114, 88, 12, 15\}$,\adfsplit
 $\{65, 55, 122, 142\}$, $\{61, 13, 139, 159\}$, $\{20, 65, 2, 106\}$, $\{78, 17, 144, 62\}$,\adfsplit
 $\{70, 150, 91, 94\}$, $\{117, 127, 64, 146\}$, $\{42, 138, 131, 23\}$, $\{145, 21, 44, 120\}$,\adfsplit
 $\{19, 141, 109, 24\}$, $\{147, 73, 23, 106\}$, $\{19, 92, 57, 87\}$, $\{154, 85, 141, 159\}$,\adfsplit
 $\{71, 135, 155, 11\}$, $\{0, 1, 61, 103\}$, $\{0, 2, 82, 152\}$, $\{0, 4, 40, 54\}$,\adfsplit
 $\{0, 16, 138, 159\}$, $\{1, 2, 33, 138\}$, $\{0, 11, 26, 110\}$, $\{0, 29, 75, 141\}$,\adfsplit
 $\{1, 3, 12, 143\}$, $\{0, 6, 33, 50\}$, $\{2, 4, 12, 62\}$, $\{1, 6, 97, 131\}$,\adfsplit
 $\{2, 3, 40, 44\}$, $\{1, 10, 24, 68\}$, $\{2, 13, 89, 115\}$, $\{2, 23, 47, 66\}$,\adfsplit
 $\{2, 54, 55, 101\}$, $\{2, 75, 87, 95\}$, $\{4, 32, 124, 146\}$, $\{4, 25, 68, 116\}$,\adfsplit
 $\{0, 7, 15, 85\}$, $\{0, 17, 113, 127\}$, $\{0, 9, 43, 115\}$, $\{1, 11, 22, 34\}$,\adfsplit
 $\{1, 17, 27, 115\}$, $\{0, 25, 97, 106\}$, $\{0, 13, 31, 71\}$, $\{0, 36, 65, 155\}$,\adfsplit
 $\{1, 80, 144, 149\}$, $\{1, 48, 55, 88\}$, $\{1, 60, 95, 135\}$, $\{1, 90, 94, 107\}$,\adfsplit
 $\{1, 59, 125, 136\}$, $\{1, 39, 41, 156\}$, $\{0, 60, 64, 156\}$, $\{1, 87, 114, 139\}$,\adfsplit
 $\{2, 10, 116, 132\}$, $\{0, 38, 73, 98\}$, $\{2, 31, 139, 144\}$, $\{2, 38, 80, 109\}$,\adfsplit
 $\{3, 4, 73, 104\}$, $\{0, 39, 66, 72\}$, $\{0, 30, 52, 84\}$, $\{0, 49, 102, 143\}$,\adfsplit
 $\{0, 45, 83, 135\}$, $\{0, 48, 69, 132\}$, $\{0, 20, 93, 125\}$, $\{0, 34, 56, 114\}$,\adfsplit
 $\{0, 42, 86, 153\}$, $\{0, 41, 79, 137\}$, $\{0, 18, 100, 149\}$, $\{0, 67, 128, 151\}$,\adfsplit
 $\{0, 32, 70, 121\}$, $\{0, 35, 109, 116\}$, $\{0, 88, 130, 144\}$
}

under the action of the mapping $x \mapsto x + 7 \adfmod{161}$.\\
The deficiency graph is connected and has girth 5.~\\

{\noindent\boldmath $\adfPENT(4, 60)$}.\\
With point set $Z_{185}$, the 2775 lines are generated from

{\small
 $\{44, 56, 75, 110\}$, $\{3, 71, 116, 130\}$, $\{12, 114, 149, 177\}$, $\{1, 24, 48, 143\}$,\adfsplit
 $\{42, 77, 145, 168\}$, $\{52, 74, 76, 149\}$, $\{76, 7, 34, 32\}$, $\{13, 142, 178, 45\}$,\adfsplit
 $\{132, 28, 162, 109\}$, $\{3, 45, 145, 151\}$, $\{137, 59, 77, 130\}$, $\{103, 177, 20, 62\}$,\adfsplit
 $\{98, 123, 9, 134\}$, $\{44, 6, 11, 98\}$, $\{119, 139, 24, 89\}$, $\{176, 15, 16, 93\}$,\adfsplit
 $\{126, 154, 24, 37\}$, $\{46, 87, 121, 99\}$, $\{128, 140, 132, 10\}$, $\{144, 15, 8, 46\}$,\adfsplit
 $\{135, 14, 162, 96\}$, $\{154, 74, 78, 150\}$, $\{76, 48, 29, 19\}$, $\{41, 70, 72, 83\}$,\adfsplit
 $\{28, 116, 178, 12\}$, $\{12, 4, 21, 88\}$, $\{87, 91, 118, 12\}$, $\{55, 85, 52, 109\}$,\adfsplit
 $\{122, 121, 71, 129\}$, $\{76, 23, 145, 85\}$, $\{158, 144, 88, 153\}$, $\{147, 144, 21, 67\}$,\adfsplit
 $\{75, 155, 32, 173\}$, $\{161, 171, 12, 125\}$, $\{57, 33, 74, 106\}$, $\{99, 178, 166, 150\}$,\adfsplit
 $\{137, 90, 176, 52\}$, $\{1, 65, 114, 153\}$, $\{68, 153, 82, 146\}$, $\{119, 34, 28, 160\}$,\adfsplit
 $\{37, 120, 115, 172\}$, $\{103, 53, 70, 21\}$, $\{169, 123, 24, 30\}$, $\{99, 51, 160, 86\}$,\adfsplit
 $\{169, 127, 28, 153\}$, $\{88, 27, 73, 122\}$, $\{158, 90, 128, 4\}$, $\{183, 21, 24, 177\}$,\adfsplit
 $\{0, 37, 103, 183\}$, $\{0, 8, 9, 120\}$, $\{0, 108, 163, 172\}$, $\{0, 48, 137, 158\}$,\adfsplit
 $\{1, 19, 53, 56\}$, $\{1, 23, 31, 168\}$, $\{3, 13, 124, 129\}$, $\{0, 14, 21, 148\}$,\adfsplit
 $\{0, 3, 66, 87\}$, $\{0, 81, 138, 167\}$, $\{1, 17, 18, 72\}$, $\{0, 12, 128, 141\}$,\adfsplit
 $\{1, 44, 102, 167\}$, $\{1, 16, 92, 106\}$, $\{2, 7, 54, 147\}$, $\{1, 66, 134, 159\}$,\adfsplit
 $\{0, 84, 127, 181\}$, $\{0, 41, 61, 101\}$, $\{0, 19, 96, 159\}$, $\{0, 51, 151, 162\}$,\adfsplit
 $\{0, 22, 89, 104\}$, $\{0, 77, 92, 164\}$, $\{0, 11, 17, 170\}$, $\{0, 20, 91, 169\}$,\adfsplit
 $\{0, 25, 94, 95\}$, $\{0, 34, 67, 140\}$, $\{0, 10, 39, 50\}$
}

under the action of the mapping $x \mapsto x + 5 \adfmod{185}$.\\
The deficiency graph is connected and has girth 5.~\\

\end{document}